\documentclass{article}
\usepackage{mathdef}
\usepackage{array,multirow,booktabs}
\usepackage{arydshln}
\usepackage{hyperref}

\usepackage{enumitem}
\usepackage{authblk}
\newcommand{\MM}{\texttt{MM-M}} 
\newcommand{\MS}{\texttt{MS}} 
\newcommand{\ML}{\texttt{MM-L}} 

\newcommand{\Jacobi}{\texttt{JAC}}
\newcommand{\Annulus}{\texttt{ANN}}
\newcommand{\Curve}{\texttt{CUR}}
\newcommand{\Hole}{\texttt{HOL}}
\newcommand{\Map}{\texttt{MAP}}
\newcommand{\Torus}{\texttt{TOR}}

\newcommand{\bs}[1]{\boldsymbol{#1}}

\title{A Stieltjes algorithm for generating multivariate orthogonal polynomials}
\author[1]{Zexin Liu}
\author[2]{Akil Narayan}
\affil[1, 2]{Department of Mathematics, and Scientific Computing and Imaging (SCI) Institute, University of Utah}

\begin{document}
\maketitle

\begin{abstract}
  Orthogonal polynomials of several variables have a vector-valued three-term recurrence relation, much like the corresponding one-dimensional relation. This relation requires only knowledge of certain recurrence matrices, and allows simple and stable evaluation of multivariate orthogonal polynomials. In the univariate case, various algorithms can evaluate the recurrence coefficients given the ability to compute polynomial moments, but such a procedure is absent in multiple dimensions. We present a new Multivariate Stieltjes (\MS) algorithm that fills this gap in the multivariate case, allowing computation of recurrence matrices assuming moments are available. The algorithm is essentially explicit in two and three dimensions, but requires the numerical solution to a non-convex problem in more than three dimensions. Compared to direct Gram-Schmidt-type orthogonalization, we demonstrate on several examples in up to three dimensions that the \MS{} algorithm is far more stable, and allows accurate computation of orthogonal bases in the multivariate setting, in contrast to direct orthogonalization approaches.
\end{abstract}

\section{Introduction}
Orthogonal polynomials are a mainstay tool in numerical analysis and scientific computing, and serve as theoretical and computational foundations for numerical algorithms involving approximation and quadrature \cite{van2007discrete, gautschi_orthogonal_2004, gautschi2006orthogonal}.

It is well-known even in the multivariate setting that such families of polynomials satisfy three-term recurrence relations \cite{jackson1936formal,krall1967orthogonal,kowalski1982recursion,kowalski1982orthogonality,xu1993multivariate,xu1994multivariate,dunkl2014orthogonal}, which are commonly exploited for stable evaluation and manipulation of such polynomials. Identification or numerical approximation of the coefficients in such relations is therefore of great importance, and in the univariate setting many algorithms for accomplishing such approximations exist \cite{gautschi_orthogonal_2004,liu2021computation}. Such procedures are absent in the multivariate setting; this paper provides one algorithmic solution to fill this gap.

\subsection{Challenges with computing orthogonal polynomials}\label{ssec:intro:challenges}
Throughout, we assume the ability to compute generalized polynomial moments, i.e., there is some algorithm available to us that evaluates $p \mapsto \int_{\numset{R}^d} p(x) \dx{\mu}(x)$ for a given positive measure $\mu$ on $\numset{R}^d$. This assumption is required for univariate algorithms as well. 

With moment information, one could devise a linear algebraic scheme that orthogonalizes some known basis (say monomials) into an orthonormal basis, seemingly providing a solution to the evaluation of orthogonal polynomials. But in finite precision, even stable orthonormalization algorithms can be ineffective due to the high condition number of the map from integral moments to the orthonormal basis; we demonstrate this in Figure \ref{fig:cond}. Thus, even computational identification of an orthonormal polynomial basis is challenging, let alone computation of recurrence matrices.

Although computing moments with respect to fairly general multivariate measures $\mu$ is certainly an open challenge, it is not the focus of this article: We focus on the separate, open challenge of computing recurrence coefficients (allowing stable evaluation of multivariate polynomials) \textit{given} the ability to compute moments.

\subsection{Contributions}
The main contribution of this paper is to extend existing methods for computing recurrence coefficients from the univariate case to the multivariate case. Recognizing that the \textit{Stieltjes} algorithm for computing recurrence coefficients in univariate problems has shown tremendous success \cite{stieltjes1884quelques,stieltjes1884some,gautschi1984some}, we devise a new, \textit{Multivariate Stieltjes} (\MS) algorithm for computing recurrence coefficients (matrices in the multivariate setting), and hence also for computing a multivariate orthonormal basis. Thus, our contribution, the Multivariate Stieltjes algorithm, is a new method for tackling the challenge identified in Section \ref{ssec:intro:challenges}. We demonstrate with several numerical examples the (substantially) improved stability of the \MS{} algorithm compared to alternative Gram-Schmidt-type approaches for computing an orthonormal basis.

The tools we employ are, qualitatively, direct multivariate generalizations of existing univariate ideas. However, the technical details in the multivariate case are so disparate from the univariate case that we must employ somewhat different theories and develop new algorithms. Our \MS{} algorithm has explicit steps in two and three dimensions, but requires non-convex optimization in four or more dimensions. We first review some existing methods to compute univariate recurrence coefficients (see Section \ref{ssec:uni-op}) and introduce notation, properties, and the three-term relation for multivariate polynomials in Section \ref{ssec:multi}. In Section \ref{sec:non-uniqueness}, we propose a \textit{canonical basis} that identifies a computational strategy for direct evaluation of multivariate polynomials. We follow this by Section \ref{sec:tensor-weight}, which shows that if $\mu$ is tensorial, then a tensor-product basis is in fact a canonical basis. Algorithms are discussed in Section \ref{sec:non-tensorial}; Section \ref{ssec:moment} describes a direct procedure using orthonormalization given polynomial moments. The new multivariate Stieltjes procedure is described in Section \ref{ssec:stieltjes}. Finally, we present a wide range of numerical examples in Section \ref{sec:numerical}, which compares these approaches, and demonstrates the improved accuracy of the \MS{} procedure.

We mention that our goals are similar to the results in \cite{barrio_three_2010,waldron_recursive_2011}, which produce explicit recurrence relations. However, these results are either specialized to certain domains, or use recurrence matrices as known ingredients. Our procedures compute recurrence matrices for general measures, and hence are quite different.

\subsection{Assumptions and caveats}
Throughout this manuscript, we assume that integral moments of arbitrary polynomials are available/computable. For ``simple'' domains, we realize this through mapped/tensorized quadrature (which is sometimes exact and sometimes approximate). For more complicated domains, we discretize the measure $\mu$ as the empirical measure associated to a large number of realizations that are independently and identically distributed according to $\mu$. Thus, sometimes our numerical examples compute orthogonal polynomials with respect to an approximate measure. However, we use a sufficiently dense grid that such approximation error is relatively small. We emphasize that this approximation error is not the focus of this article; our goal is to devise a scheme that, given the ability to compute moments, accurately computes an orthonormal polynomial basis.

The new \MS{} algorithm we develop is effective compared to direct orthonormalization schemes when the condition number of the Gram moment matrix is large. For, e.g., small dimensions $d$ and polynomial degree, this moment matrix typically is not too ill-conditioned, and so there is little benefit in the \MS{} algorithm for such situations. However, when one requires polynomials of moderately large degree, or when the Gram matrix is extremely ill-conditioned, we show that the \MS{} algorithm is effective.

Finally, we note that the technique we present leverages theories associated with total-degree spaces of polynomials, and does not directly apply to more exotic spaces. In particular, we assume that $\mu$ is non-degenerate with respect to $d$-variate polynomials. This is ensured if, for example, $\mu$ has a positive Lebesgue density over any open ball in $\numset{R}^d$.


\section{Background and notation}
We use the standard multi-index notation in $d \in \numset{N}$ dimensions.  With $\numset{N}_0$ the set of nonnegative integers, a multi-index in $d$ dimensions is denoted by $\alpha = (\alpha_1, \ldots, \alpha_d) \in \numset{N}_0^d$. For $\alpha \in \numset{N}_0^d$, and $x = (x_1, \ldots, x_d) \in \numset{R}^d$, we write monomials as $x^\alpha = x_1^{\alpha_1}, \ldots, x_d^{\alpha_d}$. The number $|\alpha| \coloneqq \alpha_1 + \cdots+ \alpha_d$ is the degree of $x^\alpha$. We denote the space of $d$-variate polynomials of exactly degree $n \in \numset{N}_0$, and up to degree $n$, respectively, by
\begin{align*}
  \numset{P}_n^d &\coloneqq \spn \{x^\alpha: |\alpha| = n, \alpha \in \numset{N}_0^d\}, &
  \Pi_n^d &\coloneqq \spn \{x^\alpha: |\alpha| \le n, \alpha \in \numset{N}_0^d\},
\end{align*}
The dimensions of these spaces are, respectively,
\begin{align*}
  r_n = r_n^d &\coloneqq \dim \numset{P}_n^d = \binom{n+d-1}{n}, & R_n = R_n^d &\coloneqq \dim \Pi_n^d = \binom{n+d}{n} = \sum_{j=0}^n r_j,
\end{align*}
Since the dimension $d$ will be arbitrary but fixed in our discussion, we will frequently suppress notational dependence on $d$ and write $r_n$, $R_n$. We will also require differences between dimensions of subspaces,
\begin{align*}
  \Delta r_n &\coloneqq r_n - r_{n-1} = \binom{n+d-2}{n}, & n &\geq 0, & d &\geq 2,
\end{align*}
where we define $r_{-1} \coloneqq 0$. We will not be concerned with $\Delta r_n$ when $d = 1$, although one could define $\Delta r_n = 0$ in this case.

Throughout we assume that $\mu$ is a given positive measure on $\numset{R}^d$, with $d \in \numset{N}$. The support of $\mu$ may be unbounded. We assume that, given any non-trivial polynomial $p$, we have, 
\begin{align}\label{eq:mu-nondegenerate}
  0 < \inprod{p}{p} &< \infty, &
  \inprod{f}{g} \coloneqq \int f(x) g(x) \dx{\mu}(x).
\end{align}
and we implicitly assume that $\int q(x) \dx{\mu}(x)$ is computationally available for arbitrary polynomials $q$. Our main goal is to compute an orthonormal polynomial basis, where orthonormality is defined through the inner product above.

\subsection{Univariate orthogonal polynomials}\label{ssec:uni-op}
In the univariate case, let $\mu$ be a positive Borel measure on $\numset{R}$ with finite moments. 
A standard Gram-Schmidt process applied to monomials with the inner product $\langle \cdot,\cdot\rangle$ yields a sequence of orthonormal polynomials $\{p_n(x)\}_{n=0}^\infty$, which satisfies $\langle p_n, p_m \rangle = \delta_{m,n}$, where $\delta_{m,n}$ is the Kronecker delta. In practical settings, it is well-known that utilizing the three-term recurrence formula for evaluation is more computationally stable compared to direct orthonormalization techniques. There exist coefficients $b_0$ and $\{a_n, b_n\}_{n \in \numset{N}}$, with $a_n = a_n(\mu)$ and $b_n = b_n(\mu)$, such that 
\begin{align}\label{eq:univariate-ttr}
x p_n(x) &= b_{n+1} p_{n+1}(x) + a_{n+1} p_n(x) + b_n p_{n-1}(x), & n &\geq 0,
\end{align}
where $b_n > 0$ for all $n \in \numset{N}_0$, and $p_0 = 1/b_0$, $p_{-1} \equiv 0$. Availability of the coefficients $(a_n, b_n)$ not only enables evaluation via the recurrence above, but also serve as necessary ingredients for various computational approximation algorithms, e.g., quadrature. Thus, knowledge of these coefficients is of great importance in the univariate case. In some cases these coefficients are explicitly known \cite{szego_orthogonal_1975}, and in cases when they are not, several algorithms exist to accurately compute approximations \cite{chebyshev1859interpolation,rutishauser1963jacobi,sack_algorithm_1971,wheeler_modified_1974,gautschi_survey_1981,gragg1984numerically,gautschi_orthogonal_2004,liu2021computation}. The procedure we present in this paper is generalization of the (univariate) Stieltjes procedure \cite{stieltjes1884quelques,stieltjes1884some,gautschi1984some}. 

\subsection{Multivariate orthogonal polynomials}\label{ssec:multi}
Now let $\mu$ be a measure on $\numset{R}^d$. 
Again a Gram-Schmidt process applied to the multivariate monomials $x^\alpha$, $\alpha \in \numset{N}_0^d$ produces a sequence of orthogonal polynomials in several variables. 
However, in the multivariable setting it is more natural to proceed in a degree-graded fashion: We shall say that $p \in \Pi_n^d$ is an orthogonal polynomial of degree $n > 0$ with respect to $\dx{\mu}$ if
\begin{align*}
  \inprod{p}{q} &= 0, & \forall q &\in \Pi_{n-1}^d.
\end{align*}
Of course, $p \equiv 1$ is the unique unit-norm degree-$0$ polynomial with positive leading coefficient. The definition above allows us to introduce, $\funspace{V}_n^d$, the space of orthogonal polynomials of degree of exactly $n$; that is
\begin{align*}
\funspace{V}_n^d = \{p \in \Pi_n^d: \langle p, q \rangle = 0, \forall q \in \Pi_{n-1}^d\}.
\end{align*}
Our assumption \eqref{eq:mu-nondegenerate} on non-degeneracy of $\mu$ implies that $\dim \funspace{V}_n^d = \dim \numset{P}_n^d = r_n$. This allows us to state results in terms of size-$r_n$ vector functions containing orthonormal bases of $\funspace{V}_n^d$ for $n \geq 0$. We fix an(y) orthonormal basis for each $n \in \numset{N}_0$,
\begin{align}\label{eq:pn-def}
  \mathbbm{p}_n &= (p_{(n,j)})_{j=1}^n = (p_{(n, 1)}, \ldots, p_{(n, r_n)})^T, & \mathrm{span}\{p_{(n,j)}\}_{j=1}^{r_n} = \mathcal{V}_n^d,
\end{align}
This fixed basis $\mathbbm{p}_n$ has a three-term recurrence relation analogous to \eqref{eq:univariate-ttr}: There exist unique matrices $A_{n+1,i} \in \numset{R}^{r_n \times r_n}$, $B_{n+1,i} \in \numset{R}^{r_n \times r_{n+1}}$, such that
\begin{align}\label{eq:multi-ttr}
    x_i \mathbbm{p}_n(x) &= B_{n+1,i} \mathbbm{p}_{n+1}(x) + A_{n+1,i}\mathbbm{p}_n(x) + B_{n,i}^T \mathbbm{p}_{n-1}(x), & i \in [d].
\end{align}
where we define $\mathbbm{p}_{-1} = 0$ and $\mathbbm{p}_{0}(x) = 1$. These matrices must satisfy the conditions,
\begin{align}\label{eq:rank-condition}
  \mathrm{rank}(B_{n,i}) &= r_{n-1}, & \mathrm{rank}(B_n) &= r_n, & B_n &\coloneqq \left( B^T_{n,1},\; \ldots,\; B^T_{n,d}\right)^T \in \numset{R}^{d r_{n-1} \times r_n},
\end{align}
see \cite[Theorem 3.3.4]{dunkl2014orthogonal}.

Given an orthonormal basis $\{\mathbbm{p}_n\}_{n \geq 0}$, there are certain recurrence matrices that make the relation \eqref{eq:multi-ttr} true. However, polynomials generated by \eqref{eq:multi-ttr} for an arbitrary set of matrices $A_{n,i}$ and $B_{n,i}$ need not be orthogonal polynomials. The following are a set of necessary conditions, the \textit{commuting conditions}, that we will require.
\begin{theorem}[{\cite[Theorem 2.4]{wolfgang_zu_castell_inzell_2005}}]\label{thm:commuting}
  If $A_{n,i}$ and $B_{n,i}$ for $i \in [d]$ and $n \in \numset{N}_0$ are recurrence matrices corresponding to an orthonormal polynomial sequence, then they satisfy the following conditions for every $i, j \in [d]$ and $n \in \numset{N}$:
  {\small 
  \begin{subequations}\label{eq:cc}
    \begin{align}\label{eq:cc-1}
      B_{n+1,i} B_{n+1,j}^T + A_{n+1,i} A_{n+1,j} + B_{n,i}^T B_{n,j} &=B_{n+1,j} B_{n+1,i}^T + A_{n+1,j} A_{n+1,i} + B_{n,j}^T B_{n,i}, \\
    B_{n,i} A_{n+1,j} + A_{n,i} B_{n,j} &=
    B_{n,j} A_{n+1,i} + A_{n,j} B_{n,i}, & 
      \\ \label{eq:cc-3}
    B_{n,i} B_{n+1,j} &= B_{n,j} B_{n+1,i}, 
  \end{align}
  \end{subequations}
  }
  where \eqref{eq:cc-1} holds for $n=0$ also.
\end{theorem}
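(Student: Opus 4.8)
The plan is to derive all three commuting conditions \eqref{eq:cc} from a single source: the fact that multiplication by the coordinates $x_i$ and $x_j$ commutes, i.e., $x_i(x_j \mathbbm{p}_n) = x_j(x_i \mathbbm{p}_n)$ as polynomial identities. The three-term recurrence \eqref{eq:multi-ttr} lets me expand $x_i x_j \mathbbm{p}_n$ in two different ways — first applying the recurrence for $x_j$, then for $x_i$ — and collect the result as a linear combination of $\mathbbm{p}_{n+2}, \mathbbm{p}_{n+1}, \mathbbm{p}_n, \mathbbm{p}_{n-1}, \mathbbm{p}_{n-2}$. By the symmetry of $x_i x_j = x_j x_i$, the coefficient (a matrix) multiplying each $\mathbbm{p}_{n+k}$ must agree when the roles of $i$ and $j$ are swapped. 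Crucially, because $\{\mathbbm{p}_m\}_{m\geq 0}$ is a genuine orthonormal basis across all degrees, the blocks $\mathbbm{p}_{n-2},\ldots,\mathbbm{p}_{n+2}$ are linearly independent as vector functions, so I may equate coefficient matrices degree-by-degree rather than merely asserting equality of the whole sum.

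The key steps, in order: First I would write out $x_j \mathbbm{p}_n = B_{n+1,j}\mathbbm{p}_{n+1} + A_{n+1,j}\mathbbm{p}_n + B_{n,j}^T \mathbbm{p}_{n-1}$, then apply $x_i \cdot$ to each term, substituting \eqref{eq:multi-ttr} once more at the appropriate degree (degree $n+1$ for the first term, degree $n$ for the second, degree $n-1$ for the third). This produces a sum over degrees $n-2$ through $n+2$. Second, I would repeat with $i$ and $j$ interchanged. Third, I would subtract the two expressions and, invoking linear independence of $\{\mathbbm{p}_m\}$, set the matrix coefficient of each degree-block to zero. The coefficient of $\mathbbm{p}_{n+2}$ yields \eqref{eq:cc-3} (since only the $B_{n+1}B_{n+2}$-type products survive there); the coefficient of $\mathbbm{p}_{n+1}$ yields \eqref{eq:cc-1}'s middle relation (the $BA + AB$ terms); and the coefficient of $\mathbbm{p}_n$ — collecting $B_{n+1}B_{n+1}^T$, $A_{n+1}A_{n+1}$, and $B_n^T B_n$ contributions — yields \eqref{eq:cc-1}. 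The coefficients of $\mathbbm{p}_{n-1}$ and $\mathbbm{p}_{n-2}$ give relations that are transposes of those already obtained, so they carry no new information. The special case $n=0$ for \eqref{eq:cc-1} follows because the lowest-degree blocks ($\mathbbm{p}_{-1}=0$) simply drop out, leaving the identity valid without the $B_{n,i}^T B_{n,j}$ terms.

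The main obstacle I anticipate is purely bookkeeping rather than conceptual: carefully tracking which recurrence matrix multiplies which block after the double substitution, and confirming that the index shifts are consistent (for instance, that applying $x_i$ to $B_{n+1,j}\mathbbm{p}_{n+1}$ correctly introduces $B_{n+2,i}$, $A_{n+2,i}$, and $B_{n+1,i}^T$ at the right places). One must be attentive to the transpose conventions, since $B_{n,i}^T$ appears in the downward term of \eqref{eq:multi-ttr} while $B_{n,i}$ appears in the upward term, and these will recombine into the symmetric-looking products of \eqref{eq:cc-1}. I would also verify at the outset that the dimensions match — that $B_{n+1,i}B_{n+1,j}^T$ and $A_{n+1,i}A_{n+1,j}$ and $B_{n,i}^T B_{n,j}$ are all $r_n \times r_n$, consistent with the coefficient of $\mathbbm{p}_n$ — which serves as a useful sanity check that the terms have been collected correctly. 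Since the original statement is cited to \cite{wolfgang_zu_castell_inzell_2005}, I would present this as the self-contained derivation underlying that reference.
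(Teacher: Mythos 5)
Your proposal is correct and follows essentially the same route the paper indicates: the paper defers to \cite{wolfgang_zu_castell_inzell_2005} and notes that the conditions come from ``expressing certain matrix moments using two different applications of \eqref{eq:multi-ttr},'' which is precisely your expansion of $x_i x_j \mathbbm{p}_n$ in two orders, since equating coefficients against the linearly independent (orthonormal) blocks $\mathbbm{p}_m$ is the same as evaluating the moments $\int x_i x_j\, \mathbbm{p}_n \mathbbm{p}_m^T \dx{\mu}$ two ways. Two minor bookkeeping points to make explicit: the coefficients of $\mathbbm{p}_{n+2}$ and $\mathbbm{p}_{n+1}$ yield \eqref{eq:cc-3} and the middle condition at the shifted index $n+1$ (so running the expansion over all $n \geq 0$ covers all $n \in \numset{N}$, with the $n=0$ case of \eqref{eq:cc-1} coming from $\mathbbm{p}_{-1} = 0$ exactly as you say), and your claim that the $\mathbbm{p}_{n-1}$, $\mathbbm{p}_{n-2}$ blocks are transposed duplicates additionally uses the symmetry of the $A_{n,i}$, which holds here because $A_{n,i} = \int x_i \mathbbm{p}_{n-1}\mathbbm{p}_{n-1}^T \dx{\mu}$.
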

The conditions above can be derived by expressing certain matrix moments using two different applications of \eqref{eq:multi-ttr}. More informally, \eqref{eq:multi-ttr} for a fixed $n$ corresponds to $d$ sets of conditions of size $r_n$ that determine the $r_{n+1}$ degrees of freedom in $\mathbbm{p}_{n+1}$. In order for all these conditions to be consistent, the recurrence matrices must satisfy certain constraints, namely \eqref{eq:cc}. 

One starting point for our multivariate Stieltjes algorithm is the following direct observation: Inspection of the recurrence relation \eqref{eq:multi-ttr} reveals that the coefficient matrices can be computed via:
\begin{equation}\label{eq:AB-def}
    A_{n+1,i} = \int_{\numset{R}^d} x_i \mathbbm{p}_n(x) \mathbbm{p}_n^T(x) \dx{\mu}, \qquad B_{n+1,i} = \int_{\numset{R}^d} x_i \mathbbm{p}_n(x) \mathbbm{p}_{n+1}^T(x) \dx{\mu}.
\end{equation}
Note that $A_{n+1,i}$ is determined by (quadratic moments of) $\mathbbm{p}_n$, but $B_{n+1,i}$ does not have a similar characterization using only degree-$n$ orthogonal polynomials. 



\section{Evaluation of polynomials}\label{sec:non-uniqueness}
The three-term relation \eqref{eq:multi-ttr} does not immediately yield an evaluation scheme. We discuss in this section one approach for such an evaluation, which prescribes a fixed orthonormal basis. (That is, we remove ``most'' of the unitary equivalence freedom for $\mathbbm{p}_n$.) Our solution for this is introduction of a particular ``canonical'' form.


\subsection{Canonical bases}\label{ssec:canonical}
The three-term recurrence \eqref{eq:multi-ttr} for a fixed $i \in [d]$ is an underdetermined set of equations for $\mathbbm{p}_{n+1}$, and hence this cannot be used in isolation to evaluate polynomials. To make the system determined, one could consider \eqref{eq:multi-ttr} for \textit{all} $i \in [d]$ simultaneously. To aid in this type of procedure, we make a special choice of orthonormal basis that we will see amounts to choosing a particular sequence of unitary transformations.
\begin{definition}\label{def:canonical}
  Let $\{\mathbbm{p}_n\}_{n \in \numset{N}_0}$ be an orthonormal set of polynomials with recurrence matrices $A_{n,i}$ and $B_{n,i}$ for $i \in [d]$, $n \in \numset{N}_0$. 
  We say that $\{\mathbbm{p}_n\}_{n \in \numset{N}_0}$ is a \textit{canonical} (orthonormal) basis, and that the matrices $A_{n,i}$ and $B_{n,i}$ are in \textit{canonical} form if the following is true: For every $n \in \numset{N}$ we have the condition
      \begin{align}\label{eq:BTB-def}
        B_{n}^T B_{n} = \sum_{i \in [d]} B_{n,i}^T B_{n,i} = \Lambda_n,
      \end{align}
    where $B_n$ is as defined in \eqref{eq:rank-condition}, and the matrices $\{\Lambda_n\}_{n \in \numset{N}}$ are a sequence of diagonal matrices with the elements of each matrix appearing in non-decreasing order.
\end{definition}
Although the condition for canonical form appears only explicitly through a condition on $B_{n,i}$, the matrices $A_{n,i}$ are coupled with $B_{n,i}$ through the commuting conditions in \eqref{thm:commuting} so that canonical form is implicitly a condition on the $A_{n,i}$ matrices as well. The utility of a canonical basis is revealed by the following result.
\begin{theorem}[See also {\cite[Theorem 3.3.5]{dunkl2014orthogonal}}]\label{thm:diagonal-evaluation}
  Let the orthonormal basis $\{\mathbbm{p}_n\}_{n \in \numset{N}_0}$ be a canonical basis so that the associated matrices $A_{n,i}$ and $B_{n,i}$ satisfy \eqref{eq:BTB-def}. Then,
  {\small
  \begin{align}\label{eq:diagonal-ttr}
    \Lambda_{n+1} \mathbbm{p}_{n+1} = \left( \sum_{i \in [d]} x_i B_{n+1,i}^T \right) \mathbbm{p}_n - \left( \sum_{i \in [d]} B_{n+1,i}^T A_{n+1,i} \right) \mathbbm{p}_n - \left( \sum_{i \in [d]} B_{n+1,i}^T B_{n,i}^T \right) \mathbbm{p}_{n-1}, 
  \end{align}
  }
  for each $n \geq 0$, where $\Lambda_{n+1} \in \numset{R}^{r_{n+1} \times r_{n+1}}$ is diagonal and positive-definite (and hence invertible).
\end{theorem}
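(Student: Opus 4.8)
The plan is to derive \eqref{eq:diagonal-ttr} by taking the $d$ vector equations in \eqref{eq:multi-ttr} --- one for each coordinate $x_i$ --- and collapsing them into a single determined system through a judicious left-multiplication. Concretely, for a fixed $n \geq 0$ and each $i \in [d]$ I would left-multiply the recurrence
\[
  x_i \mathbbm{p}_n = B_{n+1,i} \mathbbm{p}_{n+1} + A_{n+1,i} \mathbbm{p}_n + B_{n,i}^T \mathbbm{p}_{n-1}
\]
by the matrix $B_{n+1,i}^T \in \numset{R}^{r_{n+1} \times r_n}$. Since $x_i$ is a scalar, it commutes past $B_{n+1,i}^T$, so the left-hand side becomes $x_i B_{n+1,i}^T \mathbbm{p}_n$. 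Summing the resulting $d$ identities over $i \in [d]$ then collects the coefficient of $\mathbbm{p}_{n+1}$ into $\sum_{i} B_{n+1,i}^T B_{n+1,i}$, and the coefficients of $\mathbbm{p}_n$ and $\mathbbm{p}_{n-1}$ into precisely the two sums $\sum_i B_{n+1,i}^T A_{n+1,i}$ and $\sum_i B_{n+1,i}^T B_{n,i}^T$ appearing in \eqref{eq:diagonal-ttr}.

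The next step is to invoke the canonical-form hypothesis \eqref{eq:BTB-def}, which identifies $\sum_{i} B_{n+1,i}^T B_{n+1,i} = B_{n+1}^T B_{n+1}$ with the diagonal matrix $\Lambda_{n+1}$. Substituting this, and moving the $\mathbbm{p}_n$ and $\mathbbm{p}_{n-1}$ terms to the left-hand side, yields \eqref{eq:diagonal-ttr} directly. The boundary case needs only a remark: at $n = 0$ the term involving $\mathbbm{p}_{-1} = 0$ drops out, consistent with the conventions of \eqref{eq:multi-ttr}, so the derivation is uniform in $n \geq 0$.

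It remains to verify that $\Lambda_{n+1}$ is positive-definite, and this is where I would lean on the structural rank constraint rather than on the canonical condition. Writing $\Lambda_{n+1} = B_{n+1}^T B_{n+1}$ with $B_{n+1} \in \numset{R}^{d r_n \times r_{n+1}}$ as in \eqref{eq:rank-condition}, the rank condition $\mathrm{rank}(B_{n+1}) = r_{n+1}$ says exactly that $B_{n+1}$ has full column rank; hence its Gram matrix $B_{n+1}^T B_{n+1}$ is symmetric positive-definite, in particular invertible, and its diagonal entries (which the canonical definition orders non-decreasingly) are all strictly positive. I expect no genuine obstacle: the step producing \eqref{eq:diagonal-ttr} is a mechanical left-multiply-and-sum, and the only point that demands care is making explicit that the invertibility of $\Lambda_{n+1}$ --- which is what converts \eqref{eq:diagonal-ttr} into a usable evaluation scheme --- follows from the full-rank condition \eqref{eq:rank-condition} and not merely from the diagonalization \eqref{eq:BTB-def} itself.
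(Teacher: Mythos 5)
Your proposal is correct and follows essentially the same route as the paper: summing the $d$ relations after left-multiplying by $B_{n+1,i}^T$ is exactly the paper's operation of stacking the relations and left-multiplying by $B_{n+1}^T$, and both arguments obtain diagonality of $\Lambda_{n+1}$ from the canonical condition \eqref{eq:BTB-def} and positive-definiteness from the rank condition \eqref{eq:rank-condition}.
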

\begin{proof}
  The relation \eqref{eq:diagonal-ttr} is computed by (vertically) stacking the $d$ relations \eqref{eq:multi-ttr} for $i \in [d]$. Although this stacked system is overdetermined, it is consistent since \eqref{eq:multi-ttr} must hold for all $i \in [d]$. Multiplying both sides of the stacked system by $B_{n+1}^T$ yields \eqref{eq:diagonal-ttr}, and we need only explain why $\Lambda_{n+1} = B_{n+1}^T B_{n+1}$ is diagonal and positive-definite. The diagonal property is immediate since the basis is canonical, and clearly is positive semi-definite. That it is in fact positive-definite is a consequence of the rank condition \eqref{eq:rank-condition}, ensuring that $r_{n+1} = \rank(B_{n+1}) = \rank(\Lambda_{n+1})$.
\end{proof}
Equation \eqref{eq:diagonal-ttr} demonstrates how knowledge of $(A_{n+1,i}, B_{n+1,i})_{i \in [d]}$ translates into direct evaluation of $\mathbbm{p}_{n+1}$: the right-hand side of \eqref{eq:diagonal-ttr} is computable, and need only be scaled elementwise by the inverse diagonal of $\Lambda_{n+1}$. The main requirement for this simple technique for evaluation is that the recurrence matrices are in canonical form. Fortunately, it is fairly simple to transform any valid recurrence matrices into canonical form.

\subsection{Transformation to canonical form}\label{ssec:transformation}
There is substantial freedom in how the basis elements of the vectors $\mathbbm{p}_n$ are chosen.  
In particular, let $\left\{U_n\right\}_{n \geq 0}$ be an arbitrary family of orthogonal matrices defining a new basis $\mathbbm{q}_n$,
\begin{align}\label{eq:qn-def}
  \mathbbm{q}_n(x) &\coloneqq U_n \mathbbm{p}_n(x), & U_n &\in \numset{R}^{r_n \times r_n}, & U_n^T U_n &= I.
\end{align}
A manipulation of \eqref{eq:multi-ttr} shows that the basis elements $\mathbbm{q}_n$ satisfy a three-term relation,
\begin{align*}
x_i \mathbbm{q}_n(x) &= D_{n+1,i} \mathbbm{q}_{n+1}(x) + C_{n+1,i}\mathbbm{q}_n(x) + D_{n,i}^T \mathbbm{q}_{n-1}(x), & i \in [d],
\end{align*}
where the new matrices $C_{n,i}$ and $D_{n,i}$ can be explicitly derived from the unitary matrices $U_n$ and the recurrence matrices for $\mathbbm{p}_n$,
\begin{align}\label{eq:CD-def}
  C_{n,i} &= U_{n-1} A_{n,i} U_{n-1}^T, & 
  D_{n,i} &= U_{n-1} B_{n,i} U_{n}^T, &
  (i, n) &\in [d] \times \numset{N}_0.
\end{align}
Our goal now is to take arbitrary valid recurrence matrices $(A_{n,i}, B_{n,i})$ and identify the unitary transform matrices $\{U_n\}_{n \in \numset{N}}$ so that $(C_{n,i}, D_{n,i})$ are in canonical form. Since $B_n^T B_n$ is symmetric, then it has an eigenvalue decomposition,
\begin{align}\label{eq:Lambdan}
  B_n^T B_n = V_n \Lambda_n V_n^T,
\end{align}
with diagonal eigenvalue matrix $\Lambda_n$ and unitary matrix $V_n$, where we assume the diagonal elements of $\Lambda_n$ are in non-increasing order. Then by defining,
\begin{align*}
  U_n &= V_n^T, & n &\geq 1,
\end{align*}
which identifies $C_{n,i}$ and $D_{n,i}$ through \eqref{eq:CD-def}, we immediately have that 
\begin{align*}
  \sum_{i \in [d]} D_{n,i}^T D_{n,i} = \sum_{i\in[d]} (V_n^T B_{n,i}^T V_{n-1}) (V_{n-1}^T B_{n,i} V_n) = V_n^T B_n^T B_n V_n = \Lambda_n,
\end{align*}
and hence $\mathbbm{q}_n$ is a canonical basis, with $C_{n,i}$ and $D_{n,i}$ the associated recurrence matrices in canonical form. Thus, for each fixed $n$, a transformation to canonical form is accomplished via a single size-$r_n$ symmetric eigenvalue decomposition.

The discussion above also reveals how much non-uniqueness there is in the choice of canonical form through non-uniqueness in the symmetric eigenvalue decomposition: each row of $U_n$ is non-unique up to a sign, and arbitrary unitary transforms of sets of rows corresponding to invariant subspaces of $B_n^T B_n$ are allowed. If the non-increasing diagonal elements of $\Lambda_n$ have distinct values, then each of the functions in the vector $\mathbbm{q}_n$ is unique up to a multiplicative sign.

\subsection{Three-term relation conditions on matrices}
Our computational strategy computes $A_{n,i}$, $B_{n,i}$ through manipulations of polynomials computed via \eqref{eq:diagonal-ttr}, which are assumed to be orthonormal polynomials. One subtlety is that if we prescibe recurrence matrices (through a computational procedure), then usage of \eqref{eq:diagonal-ttr} requires additional conditions to be equivalent to \eqref{eq:multi-ttr}. This equivalence is summarized as follows, and forms the basis for the conditions we aim to impose in our algorithm.
\begin{theorem}[{\cite[Theorem 3.5.1]{dunkl2014orthogonal}}]\label{thm:matrix-conditions}
  Given matrices $A_{n,i}, B_{n,i}$, let $\mathbbm{p}_n$, $n\geq 0$ be generated through \eqref{eq:diagonal-ttr}. Then $\{\mathbbm{p}_n\}$ is orthonormal with respect to some positive-definite bilinear functional on $\numset{R}^d$ if and only if:
  \begin{itemize}[itemsep=0pt]
    \item $A_{n,i}$ is symmetric for all $(i,n) \in [d] \times \numset{N}_0$.
    \item $B_{n,i}$ satisfies the rank condition in \eqref{eq:rank-condition}: $\mathrm{rank}(B_{n,i}) = r_{n-1}$.
    \item The matrices satisfy the commuting conditions \eqref{eq:cc}.
  \end{itemize}
\end{theorem}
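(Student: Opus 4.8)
The statement is a multivariate Favard-type theorem, so the plan is to treat the two directions separately and concentrate effort on sufficiency. For necessity I would simply collect facts already available: symmetry of $A_{n,i}$ is immediate from the representation $A_{n+1,i}=\int x_i \mathbbm{p}_n \mathbbm{p}_n^T\dx{\mu}$ in \eqref{eq:AB-def}; the rank condition is part of the existence statement for the three-term relation (the cited \cite[Theorem 3.3.4]{dunkl2014orthogonal}, reproduced as \eqref{eq:rank-condition}); and the commuting conditions are exactly Theorem \ref{thm:commuting}. So the ``only if'' direction is essentially a restatement, and all the work lies in showing that the three conditions are \emph{sufficient} to guarantee orthonormality with respect to some positive-definite functional.

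For sufficiency, the engine I would use is a family of block Jacobi operators. Define, for each $i\in[d]$, the semi-infinite block-tridiagonal array $J_i$ whose $(n,n)$ diagonal block is $A_{n+1,i}$ and whose off-diagonal blocks are $B_{n+1,i}$ and $B_{n,i}^T$, so that the full vector recurrence \eqref{eq:multi-ttr} reads compactly as $x_i\,\mathbbm{p}=J_i\,\mathbbm{p}$ with $\mathbbm{p}=(\mathbbm{p}_0,\mathbbm{p}_1,\dots)$. Symmetry of $A_{n,i}$ together with the transpose pairing of the off-diagonal blocks makes each $J_i$ symmetric, and I would verify that the three commuting conditions \eqref{eq:cc} are precisely the nonzero block entries of the commutator $[J_i,J_j]$: the diagonal block vanishing is \eqref{eq:cc-1}, the first off-diagonal block is the second identity of \eqref{eq:cc}, and the second off-diagonal block is \eqref{eq:cc-3}. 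Hence the hypotheses say exactly that the $J_i$ are commuting symmetric operators.

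Given this, I would construct the functional algebraically to sidestep unboundedness issues (the support of $\mu$ may be unbounded): working on finitely supported sequences, set $\mathcal{L}(x^\alpha)\coloneqq e_1^T J^\alpha e_1$, where $J^\alpha\coloneqq J_1^{\alpha_1}\cdots J_d^{\alpha_d}$ and $e_1$ is the unit vector attached to $\mathbbm{p}_0=1$. Commutativity of the $J_i$ makes $J^\alpha$ independent of the order of the factors, so $\mathcal{L}$ is well-defined on monomials and extends linearly to all polynomials; symmetry of the $J_i$ then yields the self-adjointness $\inprod{x_i p}{q}=\inprod{p}{x_i q}$ for the bilinear form $\inprod{p}{q}\coloneqq\mathcal{L}(pq)$. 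A short induction using the recurrence shows $\mathbbm{p}_n(J)e_1=e_{(n)}$, the block unit vector of level $n$, whence $\mathcal{L}(\mathbbm{p}_n\mathbbm{p}_m^T)=\delta_{nm}I$, i.e. the generated polynomials are orthonormal. Positive-definiteness follows because $\mathcal{L}(p^2)=\lVert p(J)e_1\rVert^2\ge 0$, with equality forcing $p(J)e_1=0$ and hence $p=0$ once one knows the $\mathbbm{p}_n$ form a genuine degree-graded basis.

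Two points will be the real obstacles. First, the theorem generates $\mathbbm{p}_n$ through the reduced relation \eqref{eq:diagonal-ttr} rather than the full vector relation \eqref{eq:multi-ttr}; since \eqref{eq:diagonal-ttr} is only the $B_{n+1}^T$-projection of the stacked \eqref{eq:multi-ttr}, I must show that the residual $R=x_i\mathbbm{p}_n-B_{n+1,i}\mathbbm{p}_{n+1}-A_{n+1,i}\mathbbm{p}_n-B_{n,i}^T\mathbbm{p}_{n-1}$ actually vanishes for every $i$, not merely that its $B_{n+1}^T$-projection does. The residual lies in $\ker(B_{n+1}^T)$ by construction, and I expect the commuting conditions (which encode the consistency of the $d$ simultaneous relations) to be exactly what forces it into $\mathrm{range}(B_{n+1})$ as well, hence to zero; pinning this down is the crux of the argument and the reason the commuting conditions are indispensable. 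Second, the basis claim: one must check by induction that invertibility of $\Lambda_{n+1}=B_{n+1}^T B_{n+1}$ (guaranteed by $\mathrm{rank}(B_{n+1})=r_{n+1}$ in \eqref{eq:rank-condition}) makes \eqref{eq:diagonal-ttr} solvable for $\mathbbm{p}_{n+1}$, and that the same rank condition forces the degree-$(n+1)$ leading part of $\mathbbm{p}_{n+1}$ to be full rank, so that $\{\mathbbm{p}_j\}_{j\le n}$ spans $\Pi_n^d$. This basis property is precisely what underwrites the implication $p(J)e_1=0\Rightarrow p=0$ in the positive-definiteness step.
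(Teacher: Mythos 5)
First, a point of comparison: the paper does not prove this theorem at all --- it is imported verbatim as \cite[Theorem 3.5.1]{dunkl2014orthogonal}, so there is no ``paper proof'' to match your argument against; the relevant benchmark is the classical proof of Xu (see \cite{xu1994multivariate} and the book \cite{dunkl2014orthogonal}). Your framework is the right one and indeed mirrors Xu's operator-theoretic formulation: your identification of the three commuting conditions \eqref{eq:cc} with the vanishing of the diagonal, first, and second off-diagonal blocks of the commutator $[J_i,J_j]$ is correct (the $(n,n)$ block gives \eqref{eq:cc-1}, the $(n,n+1)$ block gives the middle identity, the $(n,n+2)$ block gives \eqref{eq:cc-3}), and your necessity direction is a legitimate assembly of \eqref{eq:AB-def}, the rank statement behind \eqref{eq:rank-condition}, and Theorem \ref{thm:commuting}.

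However, there is a genuine gap, and you have named it yourself without closing it: the claim that the residual
\begin{align*}
  R_i \;=\; x_i\mathbbm{p}_n - B_{n+1,i}\mathbbm{p}_{n+1} - A_{n+1,i}\mathbbm{p}_n - B_{n,i}^T\mathbbm{p}_{n-1}
\end{align*}
vanishes for every $i$ is asserted only as an expectation (``I expect the commuting conditions \dots to be exactly what forces it to zero''). This is not a technical formality; it is the entire content of the sufficiency direction. The polynomials are generated by the reduced relation \eqref{eq:diagonal-ttr}, which only guarantees $B_{n+1}^T R = 0$, i.e.\ the coefficient vectors of the stacked residual lie in $\ker(B_{n+1}^T)$, a subspace of dimension $dr_n - r_{n+1} > 0$; nothing about the rank condition alone kills $R$. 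Every subsequent step of your argument --- the induction $\mathbbm{p}_n(J)e_1 = e_{(n)}$, the orthonormality $\mathcal{L}(\mathbbm{p}_n\mathbbm{p}_m^T)=\delta_{nm}I$, and the positive-definiteness $\mathcal{L}(p^2)=\lVert p(J)e_1\rVert^2$ --- presupposes that the \emph{full} relation \eqref{eq:multi-ttr} holds, because applying $J_i$ to a block unit vector produces exactly the blocks $B_{n+1,i}$, $A_{n+1,i}$, $B_{n,i}^T$, and matching this against $x_i\mathbbm{p}_n$ requires $R_i=0$ as a polynomial identity. In the literature this step is a nontrivial induction in which the commuting conditions are used degree by degree to establish consistency of the $d$ simultaneous relations (it occupies the bulk of the proof in \cite{dunkl2014orthogonal}); until you supply that induction, your proposal is a correct reduction of the theorem to its hardest lemma, not a proof. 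Your second flagged obstacle (that \eqref{eq:rank-condition} makes $\Lambda_{n+1}$ invertible and forces $\{\mathbbm{p}_j\}_{j\le n}$ to be a degree-graded basis, so $p(J)e_1=0\Rightarrow p=0$) is comparatively minor and would follow from a routine induction, but it too needs to be written out for the positive-definiteness claim to stand.
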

Additional conditions are required to ensure that the stated positive-definite functional is integration with respect to a measure $\mu$, but since our algorithm considers only finite $n$, such a distinction is not needed for us.

\section{Recurrence matrices and canonical form for tensorial measures}\label{sec:tensor-weight}
We take a slight detour in this section to identify recurrence matrices and associated canonical forms for tensorial measures $\mu$.  If $\mu$ is a tensorial measure, i.e., $\mu = \otimes_{j=1}^d \mu_j$,
where each $\mu_j$ is a positive measure on $\numset{R}$ satisfying the non-degeneracy condition \eqref{eq:mu-nondegenerate}, then there exists a sequence of univariate orthonormal polynomials for each $j \in [d]$. In particular, fixing $j$, there exist coefficients $\{a_{j,n}\}_{n \in \numset{N}} \subset \numset{R}$ and $\{b_{j,n}\}_{n \in \numset{N}_0} \subset (0, \infty)$ such that the sequence of polynomials defined by the recurrence,
\begin{align*}
  x_j p_{j,n}(x_j) &= b_{j,n+1} p_{j,n+1}(x_j) + a_{j,n} p_{j,n}(x_j) + b_{j,n} p_{j,n-1}(x_j), & n & \geq 0,
\end{align*}
are $L^2_{\mu_j}(\numset{R})$-orthonormal,
With these univariate polynomials, we can directly construct multivariate orthonormal polynomials via tensorization,
\begin{align}\label{eq:p-tensorized}
  p_{\alpha}(x) &\coloneqq \prod_{j=1}^d p_{j,\alpha_j}(x_j), & \alpha &\in \numset{N}_0^d,
\end{align}
where $\deg p_{\alpha} = |\alpha|$. We now construct vectors containing polynomials of a specific degree as in \eqref{eq:pn-def}. With $J_n$ denote any ordered set of the multi-indices $\alpha$ satisfying $|\alpha| = n \geq 0$, define,
\begin{align}\label{eq:pn-tensorized}
  \mathbbm{p}_n &= \left(p_{\alpha^{(n,1)}}, \ldots, p_{\alpha^{(n,r_n)}}\right)^T, & 
  J_n &\coloneqq \left( \alpha^{(n,1)}, \ldots, \alpha^{(n, r_n)} \right), 
\end{align}
where $|\alpha^{(n,k)}| = n$ for each $k \in [r_n]$. 
Clearly this set $\{\mathbbm{p}_n\}_{n \in \numset{N}_0}$ is a sequence of multivariate orthogonal polynomials. The next subsection explicitly computes the associated recurrence matrices.

\subsection{Recurrence matrices}\label{ssec:recurrence}
Given the ordering of multi-indices defined by the sets $(J_n)_{n \in \numset{N}_0}$, we require a function that identifies the location of the index $\alpha^{(n,k)} + e_j$ in the set $J_{n+1}$. Fixing the sets $\{J_n\}_{n \geq 0}$, we define a function $c : \numset{N}_0^d \times [d] \rightarrow \numset{N}$ as,
\begin{align*}
  c\left(\alpha^{(n,k)}, i\right) = \textrm{The index $q \in [r_{n+1}]$ such that } \alpha^{(n,k)} + e_i = \alpha^{(n+1,q)}.
\end{align*}
Note that $c(\alpha, i)$ is well-defined for all $\alpha \in \numset{N}_0^d$ and $i \in [d]$. We can now identify recurrence matrices for the polynomials $\mathbbm{p}_n$ explicitly in terms of univariate recurrence coefficients.
\begin{theorem}\label{thm:ttr}
  With the set of polynomials $\left(\mathbbm{p}_n\right)_{n=0}^\infty$ defined by \eqref{eq:pn-tensorized} and \eqref{eq:p-tensorized}, then the recurrence matrices are given by,
  \begin{align}\label{eq:AB-tensorial}
    A_{n+1,i} &= \mathrm{diag}\left(a_{i, \alpha^{(n,k)}_i+1}\right)_{k=1}^{r_n}, &
    B_{n+1,i} &= \sum_{k=1}^{r_n} b_{i,\alpha_i^{(n,k)}+1} e_{r_n, k}e_{r_{n+1},c\left(\alpha^{(n,k)}, i\right)}^T,
  \end{align}
  where $\alpha^{(n,k)}_i$ is the $i$th component of $\alpha^{(n,k)}$
  and $e_{m,j}$ is an $m$-dimensional vector with entry $1$ in location $j$ and zeros elsewhere.
\end{theorem}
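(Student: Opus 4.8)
The plan is to reduce the claim to the uniqueness of recurrence matrices recorded beneath \eqref{eq:multi-ttr}: once the tensorized family $\{\mathbbm{p}_n\}$ is known to be orthonormal and to obey \eqref{eq:multi-ttr}, the matrices realizing that relation are unique, so it suffices to exhibit the explicit matrices of \eqref{eq:AB-tensorial} and verify \eqref{eq:multi-ttr} entrywise. First I would record orthonormality, which is immediate from the product structure of $\mu$: by Fubini,
\[
  \inprod{p_\alpha}{p_\beta} = \prod_{j=1}^d \int p_{j,\alpha_j}(x_j)\,p_{j,\beta_j}(x_j)\dx{\mu_j} = \prod_{j=1}^d \delta_{\alpha_j,\beta_j} = \delta_{\alpha,\beta},
\]
so that $\{p_\gamma\}$ is an orthonormal set and the coefficients in any finite expansion along it are uniquely determined. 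Equivalently, one may bypass uniqueness entirely and compute $A_{n+1,i}$ and $B_{n+1,i}$ directly from the moment formulas \eqref{eq:AB-def}; both routes hinge on the same elementary calculation below.

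That calculation is purely one-dimensional. Fix $i \in [d]$ and $k \in [r_n]$, write $\alpha = \alpha^{(n,k)}$ and $m = \alpha_i$, and observe that multiplication by $x_i$ affects only the $i$th tensor factor of $p_\alpha = \prod_{j=1}^d p_{j,\alpha_j}(x_j)$. Applying the one-dimensional recurrence satisfied by the factors $p_{i,m}$ (cf.\ \eqref{eq:univariate-ttr}) to that single factor gives
\[
  x_i\, p_\alpha = b_{i,m+1}\, p_{\alpha+e_i} + a_{i,m+1}\, p_\alpha + b_{i,m}\, p_{\alpha-e_i},
\]
where each product on the right is again a tensorized basis polynomial, of degrees $n+1$, $n$, and $n-1$ respectively; when $m=0$ the last term vanishes since $p_{i,-1}\equiv 0$, matching the absence of the predecessor $\alpha-e_i$.

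It then remains to package this scalar identity into matrix form. By the definition of the index map $c$, the polynomial $p_{\alpha+e_i}$ is precisely the $c(\alpha,i)$th entry of $\mathbbm{p}_{n+1}$, while $p_\alpha$ is the $k$th entry of $\mathbbm{p}_n$; comparing the $k$th row of \eqref{eq:multi-ttr} with the display above (or, equivalently, evaluating \eqref{eq:AB-def} and using orthonormality to isolate coefficients) yields $(A_{n+1,i})_{k\ell} = a_{i,m+1}\,\delta_{k\ell}$ and $(B_{n+1,i})_{kq} = b_{i,m+1}\,\delta_{q,\,c(\alpha,i)}$, which are exactly the diagonal and rank-one-sum expressions of \eqref{eq:AB-tensorial}. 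I expect the only point needing genuine care to be the multi-index bookkeeping rather than any analysis: one must track the index shift $m \mapsto m+1$ in the univariate coefficients, confirm that the degree-$(n-1)$ term $b_{i,m}\,p_{\alpha-e_i}$ is reproduced consistently by $B_{n,i}^T$ read one degree lower (substituting $\alpha^{(n-1,p)} = \alpha - e_i$ into the formula for $B_{n,i}$ returns the coefficient $b_{i,(m-1)+1} = b_{i,m}$), and verify that boundary indices with $\alpha_i = 0$ introduce no spurious contributions. This cross-degree consistency is the main thing to get right; everything else is direct substitution.
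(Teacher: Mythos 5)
Your proposal is correct and follows essentially the same route as the paper: apply the univariate three-term recurrence to the single $x_i$-dependent factor of the tensorized polynomial $p_{\alpha^{(n,k)}}$, then compare the resulting identity with \eqref{eq:multi-ttr} to read off the entries of $A_{n+1,i}$ and $B_{n+1,i}$. Your version is in fact slightly more careful than the paper's—you make explicit the orthonormality (via Fubini) that underlies uniqueness of the recurrence matrices, you keep the coefficient indices correctly tied to $\alpha_i^{(n,k)}$ rather than to $n$, and you check the cross-degree consistency of the $B_{n,i}^T$ term and the $\alpha_i=0$ boundary case, all of which the paper leaves implicit.
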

\begin{proof}
  Fixing any $n \in \numset{N}_0$ and $i \in [d]$, and for each $k \in [r_n]$, the $k$th component of the vector $\mathbbm{p}_n$ satisfies,
  \begin{align*}
    x_i p_{\alpha^{(n,k)}}(x) &= \left(x_i p_{\alpha^{(n,k)}_i}(x_i) \right) \prod_{j \neq i} p_{\alpha^{(n,k)}_j}(x_j)  \\
    &= b_{i,n+1} p_{\alpha^{(n,k)} + e_{d,i}}(x) + a_{i,n+1} p_{\alpha^{(n,k)}}(x) + b_{i,n} p_{\alpha^{(n,k)} - e_{d,i}}(x).
  \end{align*}
  By comparing the final expression above with the three-term recurrence \eqref{eq:multi-ttr}, the components of the matrices $A_{n+1,i}$ and $B_{n+1,i}$ can be identified and are given as in \eqref{eq:AB-tensorial}.
\end{proof}
When $\mu$ is tensorial, then computational methods for evaluating $\mathbbm{p}_n$ need not explicitly use recurrence matrices: the relations \eqref{eq:pn-tensorized} and \eqref{eq:p-tensorized} show that one needs only knowledge of \textit{univariate} recurrence coefficients $a_{j,n}, b_{j,n}$. Computation of these coefficients from the marginal measure $\mu_j$ is well-studied and has several solutions (cf. \eqref{ssec:uni-op}). However, if one wanted the recurrence matrices for this situation, they are explicitly available through the result above. We next show that, up to a permutation, such tensorized basis elements form a canonical basis.

\subsection{Canonical form for tensorial measures}
The explicit formulas in \eqref{eq:AB-tensorial} allow us to investigate the canonical condition \eqref{eq:BTB-def}:
\begin{align*}
  B_{n}^T B_{n} = \sum_{i \in [d]} B_{n,i}^T B_{n,i} = \diag\left( \sum_{i=1}^d b^2_{i, \alpha_i^{(n-1,1)}+1}, \ldots, \sum_{i=1}^d b^2_{i, \alpha_i^{(n-1,r_{n-1})}+1} \right)
\end{align*}
In order for $\mathbbm{p}_n$ to be a canonical basis, the diagonal elements above must be non-increasing. While it is not transparent on how to accomplish this for generic univariate coefficients $(a_{i,n}, b_{i,n})_{i \times n \in [d] \times \numset{N}}$, one can always computationally achieve this by reordering the elements of $\mathbbm{p}_n$, equivalently by reordering the elements of $J_n$, according to the elements on the right-hand side above.
\begin{corollary}
  Define permutation operators $P_n : [r_{n-1}] \rightarrow [r_{n-1}]$ given by,
  \begin{align*}
    P(k+1) \geq P(k) \hskip 10pt \Longrightarrow \hskip 10pt \sum_{i=1}^d b^2_{i, \alpha_i^{(n-1,k+1)}+1} \geq \sum_{i=1}^d b^2_{i, \alpha_i^{(n-1,k)}+1},
  \end{align*}
  for each $k \in [r_n]$. Defining a new basis, $\mathbbm{q}_n \coloneqq \left( p_{\alpha^{(n,P(k))}} \right)_{k=1}^{r_n}$, 
  then $\{\mathbbm{q}_n\}_{n=0}^\infty$ is a canonical basis.
\end{corollary}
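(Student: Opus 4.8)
The plan is to recognize the reordering $\mathbbm{p}_n \mapsto \mathbbm{q}_n$ as a special instance of the unitary change of basis from Section~\ref{ssec:transformation}, taking each $U_n$ to be a permutation matrix $P_n$, and then to exploit the diagonal structure of $B_n^T B_n$ that was just established. First I would record the precise content of that preceding computation: for the tensorized recurrence matrices, $B_n^T B_n = \sum_{i\in[d]} B_{n,i}^T B_{n,i}$ is \emph{diagonal}, since each $B_{n,i}$ has at most one nonzero per row and column, so each $B_{n,i}^T B_{n,i}$ is diagonal. Its $q$-th diagonal entry is the nonnegative scalar $\lambda^{(n)}_q \coloneqq \sum_{i:\, \alpha^{(n,q)}_i \geq 1} b^2_{i,\alpha^{(n,q)}_i}$. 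Thus the only requirement of Definition~\ref{def:canonical} that the tensor basis can fail is the \emph{ordering}: these entries need not be non-decreasing in $q$.

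Next I would apply the transformation formulas \eqref{eq:CD-def} with $U_n = P_n$ a permutation matrix, which is a legitimate choice since permutation matrices are orthogonal. The key computation, identical in form to Section~\ref{ssec:transformation}, is that the transformed matrices $D_{n,i} = P_{n-1} B_{n,i} P_n^T$ satisfy
\begin{align*}
  \sum_{i\in[d]} D_{n,i}^T D_{n,i} = P_n \Big(\sum_{i\in[d]} B_{n,i}^T B_{n,i}\Big) P_n^T = P_n \left(B_n^T B_n\right) P_n^T.
\end{align*}
Since $B_n^T B_n$ is diagonal, the right-hand side is again diagonal, and its diagonal is exactly the sequence $(\lambda^{(n)}_q)_q$ reordered by $P_n$. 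Choosing $P_n$ to be the permutation that sorts $(\lambda^{(n)}_q)_q$ into non-decreasing order therefore makes $\sum_{i\in[d]} D_{n,i}^T D_{n,i}$ diagonal with non-decreasing entries, which is precisely the canonical condition \eqref{eq:BTB-def} of Definition~\ref{def:canonical}.

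The one point that deserves care, and is really the crux of the argument, is that the permutations at different degrees may be chosen independently with no risk of conflict. This follows from the same display: $\sum_{i\in[d]} D_{n,i}^T D_{n,i} = P_n (B_n^T B_n) P_n^T$ depends only on $P_n$, because the factors $P_{n-1}$ and $P_{n-1}^T$ cancel. Hence the canonical condition at degree $n$ is governed solely by $P_n$ and is insensitive to the reordering chosen at degrees $n-1$ or $n+1$; one may therefore sort each degree's diagonal separately, and no global compatibility constraint arises.

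Finally I would note that the remaining hypotheses of Definition~\ref{def:canonical} are immediate: $\mathbbm{q}_n$ is an orthonormal basis of $\funspace{V}_n^d$ because it is merely a reindexing of the orthonormal set $\mathbbm{p}_n$, and every entry still has exact degree $n$. Hence $\{\mathbbm{q}_n\}_{n=0}^\infty$ is a canonical basis. I do not anticipate a genuine obstacle here; the only things to get right are the bookkeeping of which permutation acts on which factor in \eqref{eq:CD-def} and the observation that the diagonality of $B_n^T B_n$ collapses the entire canonical requirement to a pure sorting problem at each fixed degree.
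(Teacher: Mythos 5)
Your proposal is correct and takes essentially the same approach as the paper: the explicit tensorial formulas make $B_n^T B_n$ diagonal, so a degree-by-degree sorting permutation (a special case of the orthogonal change of basis \eqref{eq:CD-def} with $U_n$ a permutation matrix) yields exactly the canonical condition \eqref{eq:BTB-def}. Your two refinements---the explicit cancellation of $P_{n-1}$ showing that the sorting at each degree can be chosen independently, and the indexing of the diagonal entries by degree-$n$ multi-indices as $\sum_{i:\,\alpha_i^{(n,q)}\geq 1} b^2_{i,\alpha_i^{(n,q)}}$---only make rigorous (and correct the bookkeeping of) what the paper leaves implicit in the display preceding the corollary.
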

We re-emphasize that identifying a canonical basis through an algorithmic version of the procedure above is not computationally advantageous compared to direct usage of \eqref{eq:pn-tensorized} and \eqref{eq:p-tensorized}. However, the procedure above gives us a method for oracle evaluation of recurrence matrices associated to a tensorial measure $\mu$.

\section{Algorithms for computing recurrence coefficient matrices}\label{sec:non-tensorial}
We discuss two strategies for computing the recurrence matrices $(A_{n,i}, B_{n,i})$. The first is a straightforward Moment Method (\MM) that directly uses moments (e.g., of monomials) to compute an orthonormal basis that can be used directly with \eqref{eq:AB-def} to compute the matrices. However, this approach is very ill-conditioned for moderately large degree and hence has limited utility. The second approach we present is the main novel advancement of this paper, a multivariate Stieltjes (\MS) algorithm that computes the recurrence matrices through an alternative procedure. 

\subsection{The moment method (\MM)}\label{ssec:moment}
A straightforward way to compute recurrence coefficients in the univariate case is to first perform an orthogonalization step to numerically generate orthogonal polynomials as linear expansions in some specified basis (say monomials), and second to exploit the linear expansion expressions to compute the recurrence coefficients. This works in the multivariate setting as well. 
We suppose that a(ny) polynomial basis $\left\{\phi_j\right\}_{j \in \numset{N}}$ is given with the properties,
\begin{align*}
  \spn \left\{ \phi_j \right\}_{j=1}^{R_n} = \Pi_n^d, \hskip 5pt & \hskip 15pt n \in \numset{N}_0. \\
  r_n < j \leq r_{n+1} \hskip 5pt &\Longrightarrow \hskip 5pt \deg \phi_j = n+1.
\end{align*}
Again, a simple example is that $\phi_j(x)$ is a multivariate monomial $x^\alpha$ for some total ordering of the multi-indices $\alpha$ that respects the partial order induced by the $\ell^1(\numset{N}_0^d)$ norm\footnote{Degree-graded lexicographic ordering over multi-indices is an example of one such ordering.}. We assume that quadratic $\phi_j$ moments are available, allowing us to compute a Gram matrix, 
\begin{align}\label{eq:gram-matrix}
  (G_n)_{i,j} &= m(i,j), & G_n &\in \numset{R}^{R_n \times R_n}, &
  m(i,j) &= \int \phi_i(x) \phi_j(x) \dx{\mu}(x). 
\end{align}
Given this matrix, we can compute monomial expansion coefficients for the orthonormal basis $\mathbbm{p}_n$:
\begin{align}\label{eq:pn-moment}
  G_n &= L_n L_n^T, & 
  \left(\mathbbm{p}_0, \; \ldots, \; \mathbbm{p}_n\right)^T &= L^{-1}_n \Phi_n, & 
  \Phi_n &= \left(\phi_1, \; \ldots, \; \phi_{R_n}\right)^T \in \numset{R}^{R_n},
\end{align}
where $\mathbbm{p}_j$ is an $r_j$-vector containing degree-$j$ orthonormal polynomials. Combining this with \eqref{eq:AB-def}, the recurrence matrices can be directly computed,
\begin{align}\label{eq:Bnp1-mm}
  A_{n+1,i} &= \widetilde{L}_n^{-1} G_{n,i} \widetilde{L}_n^{-T},  & 
  B_{n+1,i} &= \widetilde{L}_n^{-1} \widetilde{G}_{n+1,i} \widetilde{L}_{n+1}^{-T},
  & i &\in [d],
\end{align}
where $\widetilde{L}_{n}^{-1}$ is the $r_n \times R_n$ matrix formed from the last $r_n$ rows of $L_n^{-1}$, $G_{n,i} \in \numset{R}^{R_n \times R_n}$ has entries
\begin{align}\label{eq:Gnq}
  (G_{n,i})_{i,j} &= \int x_i \phi_i(x) \phi_j(x) \dx{\mu}(x), & i, j &\in [R_n],
\end{align}
and $\widetilde{G}_{n+1,i} \in \numset{R}^{R_n \times R_{n+1}}$ equals the first $R_n$ rows of $G_{n+1,i}$. Thus, so long as the polynomial moments $m(i,j)$ and \eqref{eq:Gnq} can be evaluated, then this allows direct computation of the recurrence matrices. Of course, if only an orthonormal basis (without recurrence matrices) is desired, then one may stop at \eqref{eq:pn-moment}.

The main drawbacks to the procedures above stem from accuracy concerns due to ill-conditioning of the $G_n$ matrices. If the $\phi_i$ are selected as ``close'' to an orthonormal basis, then the $G_n$ matrices can be well-conditioned, but \textit{a priori} knowledge of such a basis is not available in general scenarios. However, this method is flexible in the sense that with minor modifications one may compute an orthonormal basis (but not necessarily recurrence matrices) for very general, non-total-degree, polynomial spaces, such as hyperbolic cross spaces.

\subsection{The Multivariate Stieltjes algorithm (\MS)}\label{ssec:stieltjes}
In this section we describe a Stieltjes-like procedure for computing recurrence matrices, which partially overcomes ill-conditioning issues of the moment method. Like the univariate procedure, we directly compute the recurrence matrices instead of attempting to orthogonalize the basis, and the procedure is iterative on the degree $n$. Thus, throughout this section we assume that the recurrence matrices $\left\{A_{m,i}, B_{m,i} \right\}_{i \in [d], m \leq n}$ are available, and our goal is to compute $A_{n+1,i}$ and $B_{n+1,i}$ for $i \in [d]$. Our main strategy for accomplishing this will be to satisfy certain ``degree-$(n+1)$'' matrix moment conditions subject to the constraints identified in \ref{thm:matrix-conditions}.

The availability of the matrices for index $m \leq n$ implies that $\mathbbm{p}_m$ for all $m \leq n$ can be evaluated through the procedure in \ref{thm:diagonal-evaluation}. (In practice we transform $A_{m,i}, B_{m,i}$ for $m \leq n$ to be in canonical form.) We will compute $A_{n+1,i}$ directly, but compute factorized components and subblocks of $B_{n+1,i}$, and we identify those subblocks now. Consider the truncated singular value decomposition of $B_{n+1,i}$,
\begin{align}\label{eq:B-decomposition}
  B_{n+1,i} &= U_{n+1,i} \Sigma_{n+1,i} V^T_{n+1,i} = U_{n+1,i} \Sigma_{n+1,i} \left( \widehat{V}_{n+1,i}^T \;\; \widetilde{V}_{n+1,i}^T \right), & i &\in [d],
\end{align}
where $U_{n+1,i}$, $V_{n+1,i}$ have orthonormal columns and are of sizes,
\begin{align*}
  U_{n+1,i} &\in \numset{R}^{r_n \times r_n}, & \Sigma_{n+1,i} &\in \numset{R}^{r_n \times r_n}, & V_{n+1,i} &\in \numset{R}^{r_{n+1} \times r_n},
\end{align*}
and $\Sigma_{n+1,i}$ is diagonal with non-negative entries in non-increasing order. $\Sigma_{n+1,i}$ must be invertible due to \eqref{eq:rank-condition}. In \eqref{eq:B-decomposition} we have further decomposed the right-singular $V_{n+1,i}$ matrices into blocks, with $\widehat{V}$ the first $r_n$ rows, and $\widetilde{V}$ the remaining rows:
\begin{align}\label{eq:V-def}
  V_{n+1,i} &= \left(\begin{array}{c} \widehat{V}_{n+1,i} \\ \widetilde{V}_{n+1,i} \end{array}\right), & 
    \widehat{V}_{n+1,i} &\in \numset{R}^{r_n \times r_n} & \widetilde{V}_{n+1,i} &\in \numset{R}^{\Delta r_{n+1} \times r_n} 
\end{align}
We do not assume that the $B_{n+1,i}$ are in canonical form, and therefore have freedom to specify the unitary transform for degree $n+1$. Without loss we implicitly choose the unitary transform $U_{n+1}$ in \eqref{eq:CD-def} so that $V_{n+1,1}$ is equal to the first $r_n$ columns of the size-$r_{n+1}$ identity matrix. This uniquely identifies $V_{n+1,1}$,
\begin{align}\label{eq:V_1}
  \widehat{V}_{n+1,1} &= I_{r_n}, & \widetilde{V}_{n+1,1} &= \bm{0}.
\end{align}
We therefore need only compute $\widehat{V}_{n+1,i}, \widetilde{V}_{n+1,i}$ for $i \geq 2$.

The remainder of this section is structured as follows: We define certain moment matrices through a modified polynomials basis in Section \ref{sssec:moment-matrix}, which immediately yields the $A_{n+1,i}$ matrices. Section \ref{sssec:sym-moment} shows how we compute the $U$ and $\Sigma$ SVD matrices of $B$. We introduce ``mixed'' moments in Section \ref{sssec:mix-moment} that allow us to obtain the $\widehat{V}$ block of the $V$ matrices. The remaining block $\widetilde{V}$ is computed using different strategies depending on the dimension $d$ of the problem. For $d = 2$, Section \ref{sssec:ON-cond} shows that $\widetilde{V}$ can be computed almost directly. For $d \geq 3$ dimensions, we must enforce the commuting conditions \eqref{eq:cc}, which is somewhat easily done in $d=3$ dimensions, but requires nontrivial optimization for $d > 3$. 

\subsubsection{Moment matrices -- computing $A_{n+1,i}$}\label{sssec:moment-matrix}
The \MS{} algorithm begins by considering moments of polynomials that are not explicitly known \textit{a priori}, but are easily generated during an algorithmic procedure. In particular we introduce the moment matrices,
\begin{align}\label{eq:ST-def}
  S_{n,i} &\coloneqq \int x_i \mathbbm{p}_n \mathbbm{p}_n^T \dx{\mu}(x), & 
  T_{n,i,j} &\coloneqq \int \widetilde{\mathbbm{p}}_{n+1,i} \widetilde{\mathbbm{p}}_{n+1,j}^T \dx{\mu}(x),
\end{align}
both $r_n \times r_n$ matrices, 
where the modified polynomial basis $\widetilde{\mathbbm{p}}$ is defined as,
\begin{align}\label{eq:ptilde-def}
  \widetilde{\mathbbm{p}}_{n+1,i}(x) &\coloneqq x_i \mathbbm{p}_n - A_{n+1,i} \mathbbm{p}_n - B_{n,i}^T\mathbbm{p}_{n-1}.
\end{align}
Note that availability of $\left\{A_{m,i}, B_{m,i} \right\}_{i \in [d], m \leq n}$ along with the ability to evaluate $\mathbbm{p}_n$ imply that the moment matrices in \eqref{eq:ST-def} can be approximated via quadrature, just as is frequently done for the $\phi_j$ moments of Section \ref{ssec:moment}.

Inspection of \eqref{eq:AB-def} immediately reveals that,
\begin{align}\label{eq:An-stieltjes}
  A_{n+1,i} = S_{n,i},
\end{align}
and hence $A_{n+1,i}$ is directly computable. In addition, $A_{n+1,i}$ is symmetric (since $S_{n,i}$ is symmetric) and hence this satisfies the first condition in \ref{thm:matrix-conditions}. This then allows $\widetilde{\mathbbm{p}}_{n+1,i}$ to be evaluated, and hence allows $T_{n,i,j}$ to be computed. While evaluating $A_{n+1,i}$ is fairly straightforward from $S_{n,i}$, computing $B_{n+1,i}$ from $T_{n,i,j}$ is more involved. 
\subsubsection{Stieltjes symmetric moments -- computing $U_{n+1,i}, \Sigma_{n+1,i}$}\label{sssec:sym-moment}
The matrices $T_{n,i,i}$ of symmetric moments allow us to compute the $U$ and $\Sigma$ matrices in the SVD of $B$. A direct computation with the three-term recurrence \eqref{eq:multi-ttr} and the definition \eqref{eq:ST-def} reveals that
\begin{align}\label{eq:stieltjes-symmetric}
  T_{n,i,i} = B_{n+1,i} B^T_{n+1,i} = U_{n+1,i} \Sigma_{n+1,i}^2 U_{n+1,i}^T.
\end{align}
Therefore for each $i \in [d]$, we can first compute the square, symmetric matrix $T_{n,i,i}$, and subsequently its eigenvalue decomposition, ordering the eigenvalues in decreasing order. Then the eigenvector matrix of $T_{n,i,i}$ is $U_{n+1,i}$ and the square root of the eigenvalue matrix equals $\Sigma_{n+1,i}$. We can also conclude that $T_{n,i,i}$ is full-rank: Since $\mathbbm{p}_n$ contains linearly independent polynomials, then $x_i \mathbbm{p}_n$ contains linearly independent polynomials, and hence $\widetilde{\mathbbm{p}}_{n+1,i}$ contains linearly independent polynomials. Since $T_{n,i,i}$ is the Gram matrix for $\widetilde{\mathbbm{p}}_{n+1,i}$ and we have assumed $\mu$ is non-degenerate, then it must be of full rank. This observation then implies that $\Sigma_{n+1,i}$ is invertible, and hence that our computed $B_{n+1,i}$ is also full rank. Therefore, we have satisfied the second condition in \ref{thm:matrix-conditions}.
Now we are left only to compute the rectangular $V_{n+1,i}$ matrices, which will involve enforcing the third condition (the commuting conditions).

\subsubsection{Stieltjes mixed moments -- computing $\widehat{V}_{n+1,i}$}\label{sssec:mix-moment}
Using mixed moments $T_{n,i,j}$ with $i \neq j$, we can compute the square matrices $\widehat{V}_{n+1,i}$, which are subblocks of $V_{n+1,i}$. (Recall from \eqref{eq:V_1} that $V_{n+1,j}$ is already known for $j=1$, so we consider only $j > 2$.) 
A similar computation as the Stieltjes procedure in \eqref{eq:stieltjes-symmetric} yields that $T_{n,i,j} = B_{n+1,i} B^T_{n+1,j}$. By using the decomposition of $B_{n+1,i}$ in \eqref{eq:B-decomposition}, we conclude that 
\begin{align}\label{eq:stieltjes-mixed}
  \widehat{V}_{n+1,i}^T \widehat{V}_{n+1,j} + \widetilde{V}_{n+1,i}^T \widetilde{V}_{n+1,j} &= \Sigma_{n+1,i}^{-1} U_{n+1,i}^T T_{n,i,j} U_{n+1,j} \Sigma_{n+1,j}^{-1}, & i,j &\in [d].
\end{align}
Letting $i=1$ and utilizing \eqref{eq:V_1}, we have,
\begin{align}\label{eq:hatV}
  \widehat{V}_{n+1,j} &= \Sigma_{n+1,1}^{-1} U_{n+1,1}^T T_{n,1,j} U_{n+1,j} \Sigma_{n+1,j}^{-1}, & j &\geq 2,
\end{align}
where everything on the right-hand side is known and computable.

Note that here we have only utilized $T_{n,i,j}$ for $1 = i \neq j$. The case $1 \neq i \neq j$ is vacuous for $d=2$, and we will see that the remaining block $\widetilde{V}$ can already be computed. When $d \geq 3$, we do require $1 \neq i \neq j$ to identify $\widetilde{V}$.

\subsubsection{$d=2$: Orthonormality conditions for $\widetilde{V}_{n+1,i}$}\label{sssec:ON-cond}
For $d = 2$, we now need only compute $\widetilde{V}_{n+1,2}$. Since $\Delta r_{n} = 1$ for every $n$ when $d = 2$, then $\widetilde{V}_{n+1,2}$ is a $1 \times r_n$ vector. To reduce notational clutter, we consider fixed $n$ and use the following notation,
\begin{align*}
  \bm{y} \coloneqq \widetilde{V}^T_{n+1,2} \in \numset{R}^{r_n}.
\end{align*}
Then since $V_{n+1,2}$ has orthonormal columns, i.e., $V_{n+1,2}^T V_{n+1,2} = I_{r_n}$, we have,
\begin{align}\label{eq:V-orthonormality}
  \bm{y} \bm{y}^T = I_{r_n} - \widehat{V}_{n+1,2}^T \widehat{V}_{n+1,2},
\end{align}
which defines $\bm{y}$ up to a sign. More precisely, we have
\begin{align}\label{eq:yz-condition}
  \bm{y} = \pm \bm{z},
\end{align}
where $\bm{z}$ is computed either as a rank-1 Cholesky factor or as the positive semi-definite square root of the right-hand side of \eqref{eq:V-orthonormality}. Although it appears the multiplicative sign needs to be chosen, if we choose $(i,j) = (1,2)$ in \eqref{eq:cc-3}, then we have
\begin{align*}
  \left( B_{n,1} U_{n+1,2} \Sigma_{n+1,2} \widehat{V}_{n+1,2} \;\;\; \pm B_{n,1} U_{n+1,2} \Sigma_{n+1,2} \bm{z} \right) = B_{n,2} B_{n+1,1}.
\end{align*}
Thus the choice of sign makes no difference because the last column of $B_{n+1,1}$ is a zero vector. Therefore, we arbitrarily choose the sign. This completes the computation of $B_{n+1,i}$ for $d=2$; in this case we need not impose the commuting conditions, but they are needed for $d > 2$.

\subsubsection{$d > 2, n=0$: Falling back on moments}\label{sssec:backonmoments}
For $d > 2$ and $n = 0$, then $B_{n+1,i} \in \numset{R}^{1 \times d}$, and hence $\widetilde{V}_{n+1,i} \in \numset{R}^{1 \times (d-1)}$. We have two conditions to impose on this vector of length $d-1 \geq 2$:
\begin{itemize}
  \item The scalar-valued commuting condition \eqref{eq:cc-1} (the others do not apply for $n = 0$).
  \item A unit-norm condition on the column vector $V_{n+1,i}$ (as is used in the previous section)
\end{itemize}
This amounts to 2 conditions on this vector (although the second condition does not determine a multiplicative sign). However, as this approach combining all these conditions can be relatively cumbersome to simply determine a vector, in this case we fall back to using \texttt{MM} routines. When $n = 0$, the \texttt{MM} Gramians are typically well-conditioned. Therefore, when $d > 2$ and $n = 0$, we use \eqref{eq:Bnp1-mm} to compute the $B_{1,i}$ matrices.

\subsubsection{$d > 2$, $n > 0$: The commuting conditions -- computing $\widetilde{V}_{n+1,i}$}\label{sssec:c-cond}
We recall that our remaining task is to compute $\widetilde{V}_{n+1,i} \in \numset{R}^{\Delta_{n+1} \times r_n}$ for $2 \leq i \leq d$. Our tools to accomplish this will be (i) the commuting conditions (ii) orthonormality conditions on the columns of $V_{n+1,i}$, and (iii) the mixed moments $T_{n,i,j}$ for $1 \neq i \neq j$. For $n>0$, the commuting condition \eqref{eq:cc-3} with $i=1$ implies:
\begin{align}\label{eq:cc-d2-temp}
  B_{n,1} U_{n+1,j} \Sigma_{n+1,j} \left( \widehat{V}_{n+1,j}^T \;\; \widetilde{V}^T_{n+1,j} \right) &= B_{n,j} B_{n+1,1}, & j \geq 2,
\end{align}
where only $\widetilde{V}_{n+1,j}$ is unknown. Note that we have made the choice \eqref{eq:V_1} for $\widetilde{V}_{n+1,1}$, which implies that the last $\Delta r_{n+1}$ columns of $B_{n,j} B_{n+1,1}$ vanish, i.e., the last $\Delta r_{n+1}$ columns of \eqref{eq:cc-d2-temp} read,
\begin{align}\label{eq:Kj-def}
  K_{n+1,j} \widetilde{V}_{n+1,j} &= \bm{0}, & K_{n+1,j} &\coloneqq B_{n,1} U_{n+1,j} \Sigma_{n+1,j} \in \numset{R}^{r_{n-1} \times r_n}.
\end{align}
Thus, the columns of $\widetilde{V}_{n+1,j}$ lie in the kernel of the known matrix $K_{n+1,j}$. I.e., we have,
\begin{align}\label{eq:Cj-def}
  \widetilde{V}_{n+1,j}^T &= \Psi_j C_j, & \Psi_j &\in \numset{R}^{r_n \times \Delta r_n}, & C_j &\in \numset{R}^{\Delta r_n \times \Delta r_{n+1}},
\end{align}
where $C_j$ is unknown and $\Psi_j$ is known (computable), containing an orthonormal basis for $\ker(K_{n+1,j})$,
\begin{align*}
  \mathrm{range}(\Psi_j) &= \ker(K_{n+1,j}), & \Psi_j^T \Psi_j &= I_{\Delta r_n \times \Delta r_n}.
\end{align*}
We now use orthonormality of the columns of $V_{n+1,j}$. In particular this implies, 
\begin{align*}
  C_j C_j^T = D_j \coloneqq I_{\Delta r_n} - \Psi_j^T \widehat{V}_{n+1,j}^T \widehat{V}_{n+1,j} \Psi_j.
\end{align*}
Since $D_j$ is a symmetric, positive semi-definite matrix, then $C_j$ must be given by,
\begin{align}\label{eq:Cj-decomp}
  C_j &= E_j W_j, & E_j &\coloneqq \left( \sqrt{D_j} \;\;\;\;\;\; \bm{0}_{\Delta r_n \times (\Delta r_{n+1} - \Delta r_n)} \right), & W_j &\in \numset{R}^{\Delta r_{n+1} \times \Delta r_{n+1}},
\end{align}
where $W_j$ is a unitary matrix, and $\sqrt{D}_j$ is the symmetric positive semi-definite matrix square root of $D_j$. We therefore need only determine $W_j$. The final linear conditions we impose are the remaining mixed moment conditions from Section \ref{sssec:mix-moment} involving $T_{n,i,j}$ for $1 \neq i \neq j \geq 2$. Using \eqref{eq:stieltjes-mixed} and writing in terms of the unknown $W_i, W_j$ yields,
\begin{align}\label{eq:wopp}
  E_i W_i W_j^T E_j^T &= H_{n,i,j} & 2 \leq i, j \leq d,\;\; i \neq j \\\nonumber
  \textrm{subject to }\;  W_j^T W_j &= I_{\Delta r_{n+1}} & 2 \leq j \leq d.
\end{align}
where,
\begin{align*}
  H_{n,i,j} = \Psi_i^T (\Sigma_{n+1,i}^{-1} U_{n+1,i}^T T_{n,i,j} U_{n+1,j} \Sigma_{n+1,j}^{-1} - \widehat{V}_{n+1,i}^T \widehat{V}_{n+1,j}) \Psi_j, 
\end{align*}
is a computable matrix. This optimization problem for $\{W_j\}_{j=2}^d$ must be solved to determine the $W_j$ matrices. Once these are determined, then $\widetilde{V}_{n+1,i}$ is determined through \eqref{eq:Cj-def} and \eqref{eq:Cj-decomp}. This is a non-convex optimization, and is a generalization of a weighted orthogonal Procrustes problem (WOPP). Even the original WOPP has no known direct solution, so that numerical optimization must be employed to solve the above problem \cite{gower_procrustes_2004}. Fortunately, when $d=3$ some extra manipulations do yield a direct solution.

\subsubsection{$d=3$: Circumventing the WOPP}\label{sssec:d3-woop}
When $d = 3$, the problem \eqref{eq:wopp} simplifies substantially since we need only compute $W_2, W_3$. First we note that if any pair of orthogonal matrices $(W_2, W_3)$ satisfies \eqref{eq:wopp}, then so does the pair $(I_{\Delta r_{n+1}}, W_3 W_2^T)$. Therefore, we may choose $W_2 = I_{\Delta r_{n+1}}$ without loss. This then determines $\widetilde{V}_{n+1,2}$ through \eqref{eq:Cj-def} and \eqref{eq:Cj-decomp}.

The determination of $W_3$ in \eqref{eq:wopp} now reduces to an instance of a WOPP:
\begin{align}\label{eq:E3-wopp}
  E_2 W_3^T E_3^T = H_{n,2,3}, \hskip 10pt \textrm{subject to} \hskip 10pt W_3^T W_3 = I_{\Delta r_{n+1}} 
\end{align}
We now notice that $E_j$ defined in \eqref{eq:Cj-decomp} is rectangular, but when $d=3$ has only one more column than row, i.e., $\Delta r_n = n=1$ and thus $\Delta r_{n+1} = 1 + \Delta r_n$. Then with 
\begin{align*}
  E_j &= X_j \left( Y_j\;\; \bm{0}_{(n+1) \times 1} \right) Z_j^T,
\end{align*}
the reduced singular value decompositions of $E_2$ and $E_3$, then \eqref{eq:E3-wopp} can be rewritten as
\begin{align*}
  I_{(n+1) \times (n+2)} W I_{(n+2) \times (n+1)} &= Y_2^{-1} X_2^T H_{n,2,3} X_3 Y_3^{-1}, & 
  W &= Z_2^T W_3 Z_3,
\end{align*}
where $W$ is an orthogonal matrix of size $\Delta r_{n+1} = n+2$. Determining $W$ uniquely identifies $W_3$, but the above relation shows that the $(n+1) \times (n+1)$ principal submatrix of $W$ is given by $Y_2^{-1} X_2^T H_{n,2,3} X_3 Y_3^{-1}$. To determine the last row and column of $W$, we write,
\begin{align*}
  W &= \left(\begin{array}{cc} Y_2^{-1} X_2^T H_{n,2,3} X_3 Y_3^{-1} & \vline \\
    & \bm{v} \\
  \bm{w}^T & \vline \end{array}\right),  & \textrm{$\bm{w}$, $\bm{v}$ unknown},
\end{align*}
and since $W$ is an orthogonal matrix and must have orthonormal columns, then the first $n+1$ entries of $\bm{w}$ are determined up to a sign, and the signs can be determined by enforcing pairwise orthogonality conditions among the first $n+1$ columns. The final column $\bm{v}$ can be determined (up to an inconsequential sign) as a unit vector orthogonal to the first $n+1$ columns. Thus, $W$ is computable, which in turn determines $W_3$, and completes the solution to \eqref{eq:wopp} for $d=3$.

\section{Numerical test}\label{sec:numerical}
We present numerical results showcasing the effectiveness of the \MS{} algorithm for computing orthogonal polynomials on two and three-dimensional domains. 
We measure the efficacy of any particular method through a numerical orthogonality condition. Each method first generates computational approximations to the matrices $\{A_{n,i}, B_{n,i}\}$, and subsequently can generate computational approximations $\widehat{\mathbbm{p}}_n$ to an orthonormal basis $\mathbbm{p}_n$. Fixing a maximum degree $N$, we then use quadrature (approximate or exact as described) to evaluate the error matrix,
\begin{align}\label{eq:e_N}
  E &= \begin{pmatrix}
M_{0,0} & \cdots & M_{0,N} \\
\vdots & \ddots & \vdots \\
M_{N,0} & \cdots & M_{N,N} \\
\end{pmatrix} - I_{R_N}, & M_{m,n} &= \int \mathbbm{p}_{m}(x) \mathbbm{p}_{n}^T(x) \dx{\mu}(x),
\end{align}
where $E$ is an $R_N \times R_N$ matrix. We will both show plots of this error matrix and also report entrywise maximum values, $\|E\|_{\infty,\infty} = \max_{i,j \in [R_N]} |E_{i,j}|$. In all experiments, we set $N=39$ for $d=2$, and $N=15$ for $d=3$. (These choices result in $R_N = 820$ and $R_N = 816$, respectively.) We compare four methods:
\begin{itemize}
  \item (Exact) When $\mu$ is a tensor-product measure, we use the procedure in Section \ref{sec:tensor-weight} and summarized in Algorithm \ref{alg:ttr} to first explicitly compute the recurrence matrices, and subsequently to evaluate polynomials through the procedure in Section \ref{sec:non-uniqueness}.
  \item (\MS) The novel algorithm of this manuscript, the Multivariate Stieltjes algorithm, described in Section \ref{ssec:stieltjes} and summarized in Algorithm \ref{alg:ms}.
  \item (\MM) The moment method of Section \ref{ssec:moment}, involving direct orthogonalization of the monomial basis, i.e., the functions $\phi_j$ are monomials $x^\alpha$.
  \item (\ML) The moment method of Section \ref{ssec:moment}, involving direct orthogonalization of the tensorial Legendre polynomial basis, i.e., the functions $\phi_j$ are tensorial Legendre polynomials with respect to the uniform measure $\nu$ whose support is a bounding box of the support of $\mu$.
\end{itemize}
All methods assume the ability to compute (general) polynomial moments in order to carry out computations, and we describe in subsections below how we approximately or exactly compute these moments via quadrature. Each experiment uses the same quadrature rule for all methods. A summary of the experiments (i.e., the various measures $\mu$) is given in Table  \ref{tab:experiments}

\begin{table}[tbhp]
{\footnotesize
  \caption{Abbreviation and subsection for the examples considered.}\label{tab:experiments}
\begin{center}
  \begin{tabular}{@{}lccc@{}}\toprule
    Example &  Abbreviation & Dimension & Section \\\midrule
    Jacobi weight function & \Jacobi & 2, 3 & \ref{sssec:jacobi} \\
    Uniform measure on an annulus & \Annulus & 2 & \ref{sssec:annulus} \\
    Uniform measure between polar curves & \Curve & 2 & \ref{sssec:curve} \\ 
    Uniform Measure within a torus & \Torus & 3 & \ref{sssec:torus} \\
    Uniform measure on a rectangle minus a ball & \Hole & 2 & \ref{sssec:hole} \\ 
    Uniform measure on Madagascar & \Map & 2 & \ref{sssec:map} \\\bottomrule
  \end{tabular}
\end{center}
}
\end{table}

\subsection{Experiments with moments via tensorized Gaussian quadrature}\label{ssec:gq}
In this subsection we compute polynomial moments with respect to the measure $\mu$ via tensorized Gaussian quadrature. In all cases except Section \ref{sssec:curve}, this quadrature is exact (in infinite precision).

\subsubsection{\Jacobi: Tensorized Jacobi measure}\label{sssec:jacobi}
For an initial investigation, we consider a tensorial measure,
\begin{align*}
\dx{\mu}(x) &= \prod_{i=1}^d B(\alpha_i, \beta_i) (1-x_i)^{\alpha_i} (1+x_i)^{\beta_i}, & x &\in [-1, 1]^d & \alpha_i, \beta_i &> -1,
\end{align*}
where $B(\cdot,\cdot)$ is the Beta function, which is a tensorized Jacobi (or Beta) measure. To compute moments, we utilize tensorized Gauss quadrature of sufficiently high order so that all integrals are exact (in exact arithmetic). We randomly generated the $\alpha_i, \beta_i$ parameters by uniformly sampling over the interval $(-1, 10)$, resulting in the choices:
\begin{align*}
  (\alpha_1, \alpha_2) &= (3.80, 0.78), & (\beta_1, \beta_2) &= (7.34, 8.26), & d&= 2 \\
  (\alpha_1, \alpha_2, \alpha_3) &= (1.61, 0.32, 3.01), & (\beta_1, \beta_2, \beta_3) &= (-0.89, 9.83, 7.67), & d&= 3 
\end{align*}

\begin{figure}[htbp]
\centering
\includegraphics[width=1\textwidth]{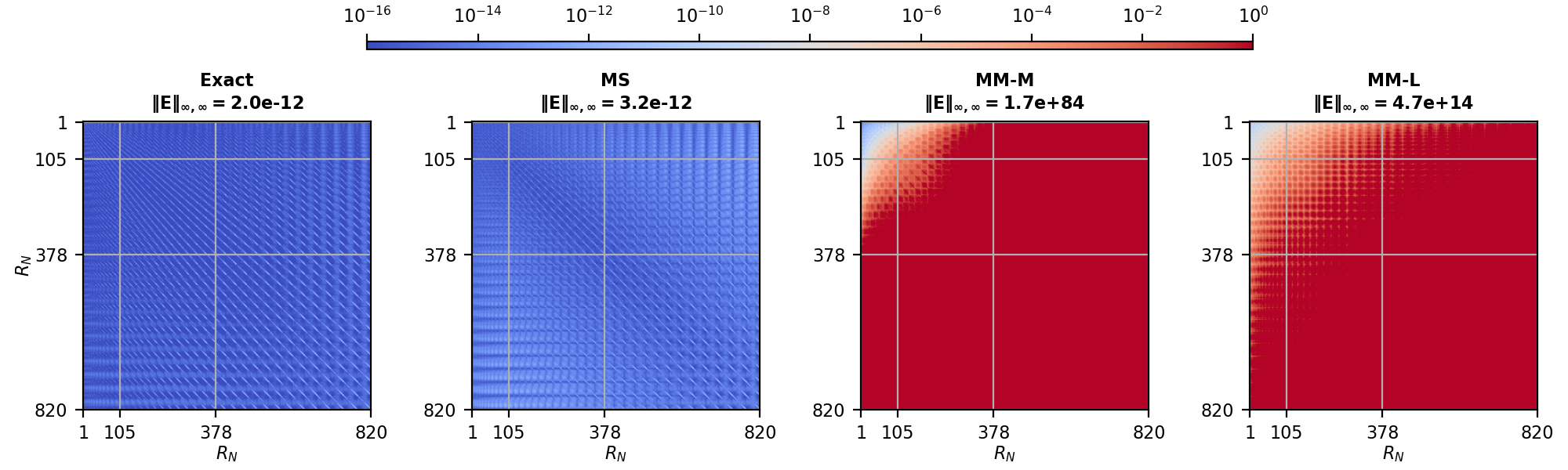}\\
\includegraphics[width=1\textwidth]{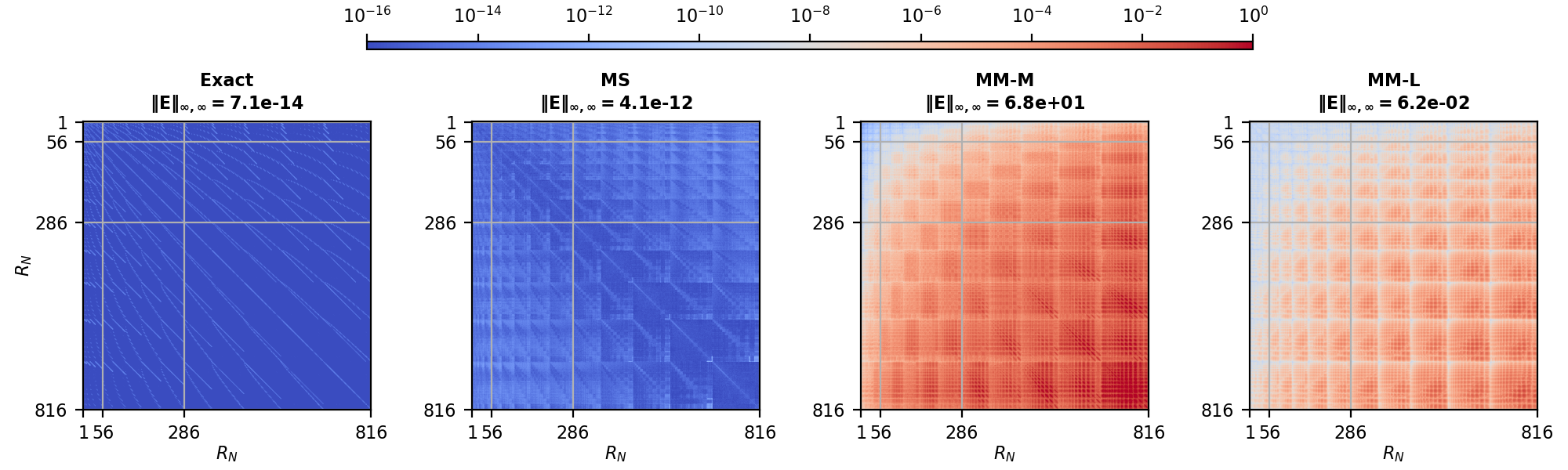}
  \caption{\Jacobi \ results, Section \ref{sssec:jacobi}: Visualization of the error matrix $E$ in dimension $d=2$ (top) and $d=3$ (bottom). From left to right in each row: the Exact, \MS, \MM, and \ML{} algorithms. We choose $10^0$ as the upper saturation point for all colormaps as values beyond this indicate $\mathcal{O}(1)$ error; we continue to impose this saturation value for all subsequent error plots.}
 \label{fig:tensor}
\end{figure}

Figure \ref{fig:tensor} visualizes the Gramian error matrix $E$ in \eqref{eq:e_N} for $d=2, 3$. One observes that the Exact and \MS{} algorithms performs very well, but both the \MM{} and \ML{} algorithms suffer accumulation of roundoff error. Even in this case, when the \ML{} algorithm uses a ``reasonable'' choice of basis for orthogonalization, instabilities develop quickly. In this simple case when quadrature is numerically exact, a standard orthogonalization routine produces unstable results.

\subsubsection{\Annulus: Measure on an Annulus}\label{sssec:annulus}
Our second test is the uniform measure $\mu$ with support in an annular region in $d=2$ dimensions centered at the origin. In polar coordinates $(r,\theta)$, this is represented as,
\begin{align}\label{eq:domain-polar}
  \mathrm{supp}(\mu) = \left\{ (r,\theta) \;\; \big| \theta_1 \leq \theta \leq \theta_2, \;\; r_1(\theta) \leq r \leq r_2(\theta) \right\},
\end{align}
where $(\theta_1, \theta_2) = (0, 2\pi)$ and $(r_1(\theta), r_2(\theta)) = (0.5, 1.0)$. Quadrature with respect to the uniform measure on this domain can exactly integrate polynomials (in the $x$ variable) using a tensor-product quadrature over $(r,\theta)$ space using Fourier quadrature in $\theta$ and Legendre-Gauss quadrature in the $r$ variable. We use a large enough quadrature rule so that all integrals are numerically exact.

\begin{figure}[htbp]
\centering
\includegraphics[width=0.6\textwidth]{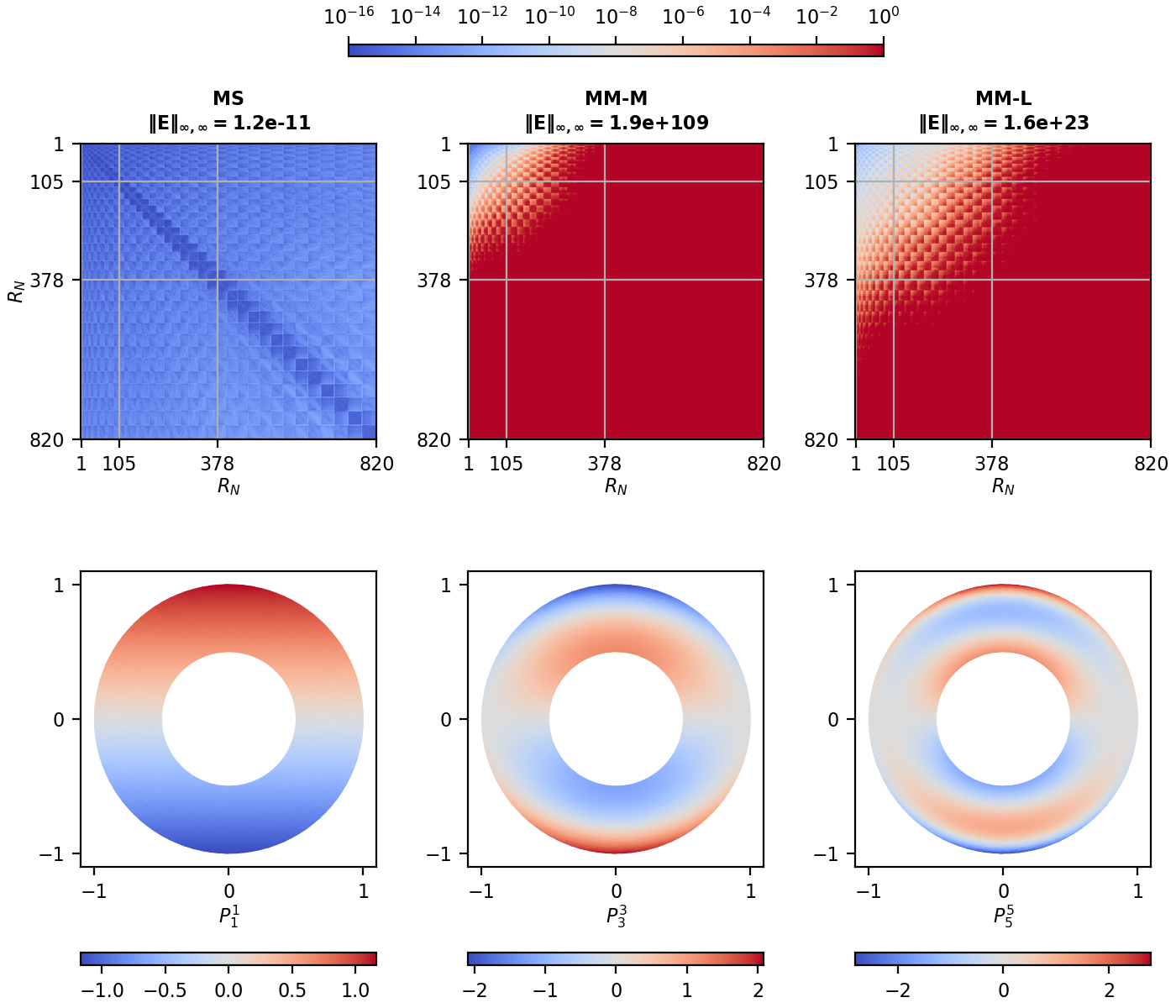}
  \caption{\Annulus \ results, Section \ref{sssec:annulus}. Top: Visualization of the error matrix $E$ for the \MS{}, \MM{}, and \ML{} algorithms from left to right. Bottom: Evaluations of the $r_n$'th entry of $\mathbbm{p}_n$ for degree $n = 1, 3, 5$ using the \MS{} algorithm.}\label{fig:annulus}
\end{figure}

In this case we do not know exact formulas for the recurrence matrices, so we rely on the metric $E$. Figure \ref{fig:annulus} shows again that the \MS{} algorithm performs better than the \MM{} and \ML{} methods for large degrees.

\subsubsection{\Curve: Measure within polar curves}\label{sssec:curve}
We now consider a more difficult example again in $d=2$ dimensions. We again use a curve defined as \eqref{eq:domain-polar}, but this time it is a region bounded between two Archimedean spirals. In particular, we set,
\begin{align*}
  (\theta_1, \theta_2) &= (0, 6\pi), & (r_1(\theta), r_2(\theta)) &= (0.8\theta, \theta).
\end{align*}
Again we choose $\mu$ as the uniform measure over the region defined in \eqref{eq:domain-polar}.

We write integrals as iterated, with the inner integral over $r$ exactly computable using Legendre-Gauss quadrature. But the outer integral in $\theta$ involves terms both polynomial and trigonometric polynomial in $\theta$, and we approximately integrate these values with a $10^6$-point Fourier quadrature rule. As can be seen in Figure \ref{fig:spiral}, the novel \MS{} procedure once again is much more stable than the \MM{} and \ML{} approaches.

\begin{figure}[htbp]
\centering
\includegraphics[width=0.6\textwidth]{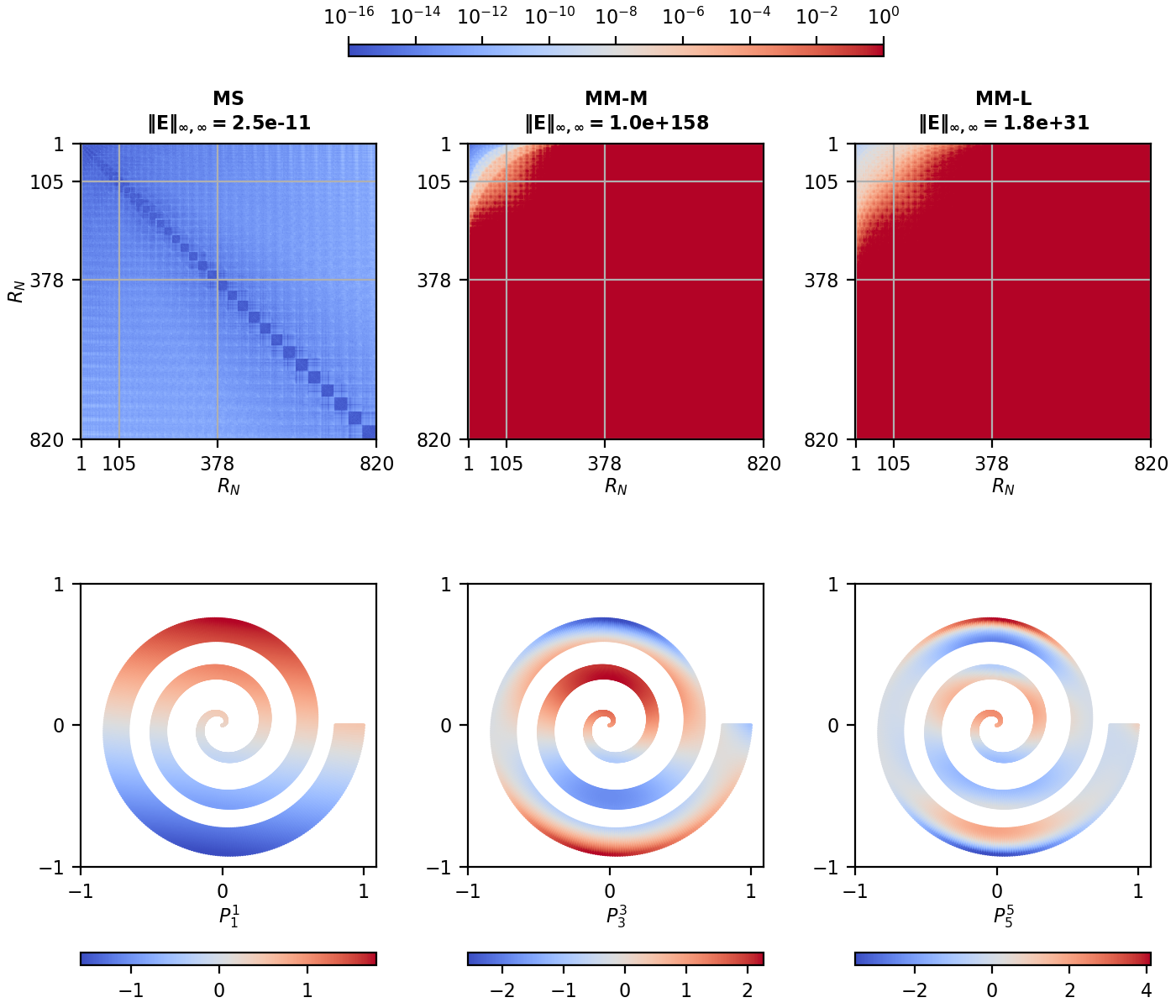}
  \caption{\Curve \ results, Section \ref{sssec:curve}.  Top: Visualization of the error matrix $E$ for the \MS{}, \MM{}, and \ML{} algorithms from left to right. Bottom: Evaluations of the $r_n$'th entry of $\mathbbm{p}_n$ for degree $n = 1, 3, 5$ using the \MS{} algorithm.}\label{fig:spiral}
\end{figure}

\subsubsection{\Torus: Uniform measure inside a torus}\label{sssec:torus}
We consider the uniform measure over the interior of a torus, whose parametric representation of the boundary is given by,
\begin{align*}
  x_1(\theta, \phi) &= (R + r\cos(\theta)) \cos{\phi}, &
  x_2(\theta, \phi) &= (R + r\cos(\theta)) \sin{\phi}, &
x_3(\theta, \phi) &= r \sin(\theta),
\end{align*}
for $\theta, \phi \in [0, 2\pi)$. The interior of the torus is defined by $(\sqrt{x_1^2+x_2^2} - R^2)^2 + x_3^2 < r^2$. We choose $r=1$ and $R=2$.

Quadrature with respect to the uniform measure on this domain can exactly integrate polynomials using a tensor-product quadrature over $(r,\theta,\phi)$ space using Fourier quadrature in $\theta$ and $\phi$ and an Legendre-Gauss quadrature in the $r$ variable. We use a large enough quadrature rule so that all integrals are numerically exact. In Figure \ref{fig:torus_1} we again observe that \MS{} outperforms \MM{} and \ML. However, these two moment-based procedures give more reasonable results in this case since the polynomial degree is relatively low.
\begin{figure}[htbp]
\centering
\includegraphics[width=0.6\textwidth]{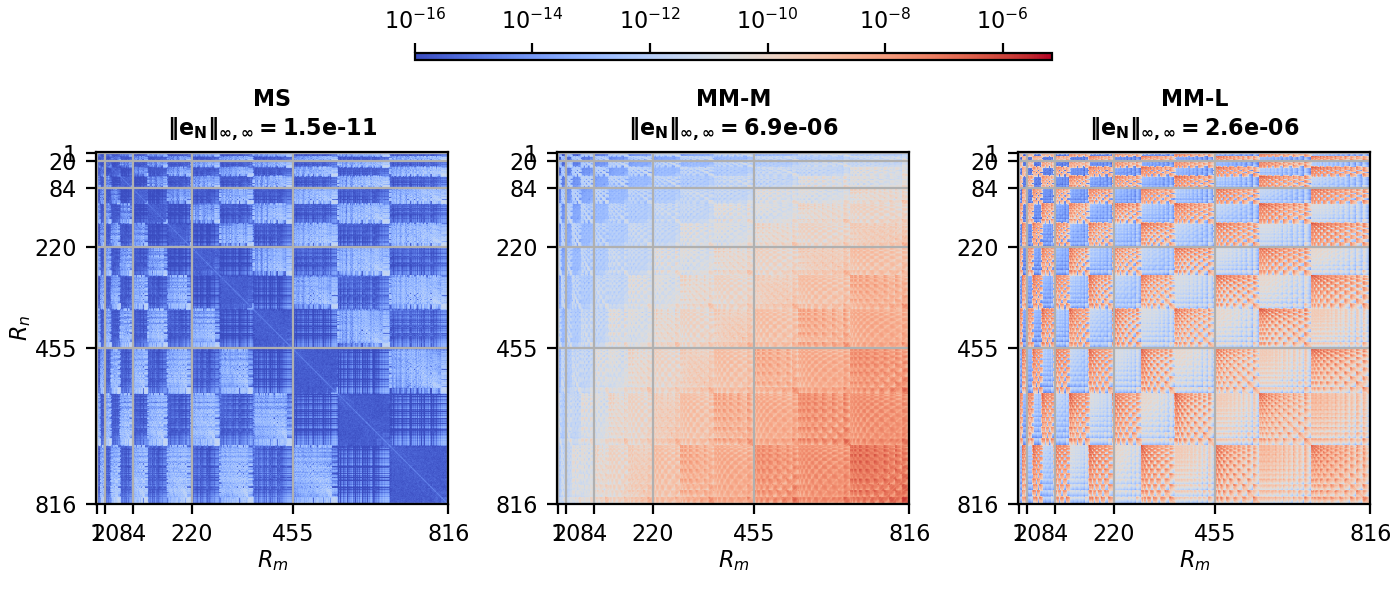}
  \caption{\Torus \ results, Section \ref{sssec:torus}. Visualization of the error matrix $E$ for the \MS{}, \MM{}, and \ML{} algorithms from left to right.}\label{fig:torus_1}
\end{figure}

\subsection{Moments via Monte Carlo techniques}\label{subsec:mc}
We now consider two more complicated domains, where integration is performed approximately using a Monte Carlo quadrature rule. I.e., we approximate moments with respect to a uniform measure over a domain $D$ via,
\begin{align*}
  \int_D p(x) \dx{x} \approx \frac{1}{M} \sum_{m=1}^M p(x_m),
\end{align*}
where $D$ is a two-dimensional domain and $\{x_m\}_{m=1}^M$ are iid random samples from $\mu$. In all examples we use a single, fixed instance of the Monte Carlo samples. Therefore one can consider this computing approximately orthogonal polynomials with respect to the uniform measure over $D$, or as computing (numerically) exactly orthogonal polynomials with respect to a size-$M$ discrete measure. We emphasize again that our goal is not to construct accurate quadrature rules, but rather to construct orthogonal polynomials given some quadrature rules. In all simulations here we take $M = 10^8$.

\subsubsection{\Hole: Square with a hole}\label{sssec:hole}
We consider the uniform measure $\mu$ over the two dimension domain $[-1,1]^2 \backslash B_1(0)$, where $B_1(0)$ is the origin-centered unit ball of radius 1. Figure \ref{fig:hole} shows results for this experiment. The \MS{} algorithm again performs the best, but we see a notably increased error in the orthogonality metric compared with the previous examples. We attribute this increased error to an increase in the condition number of the associated matrices of the \MS{}. We investigate this in more detail in Section \ref{ssec:condnum}.

\begin{figure}[htbp]
\centering
\includegraphics[width=0.6\textwidth]{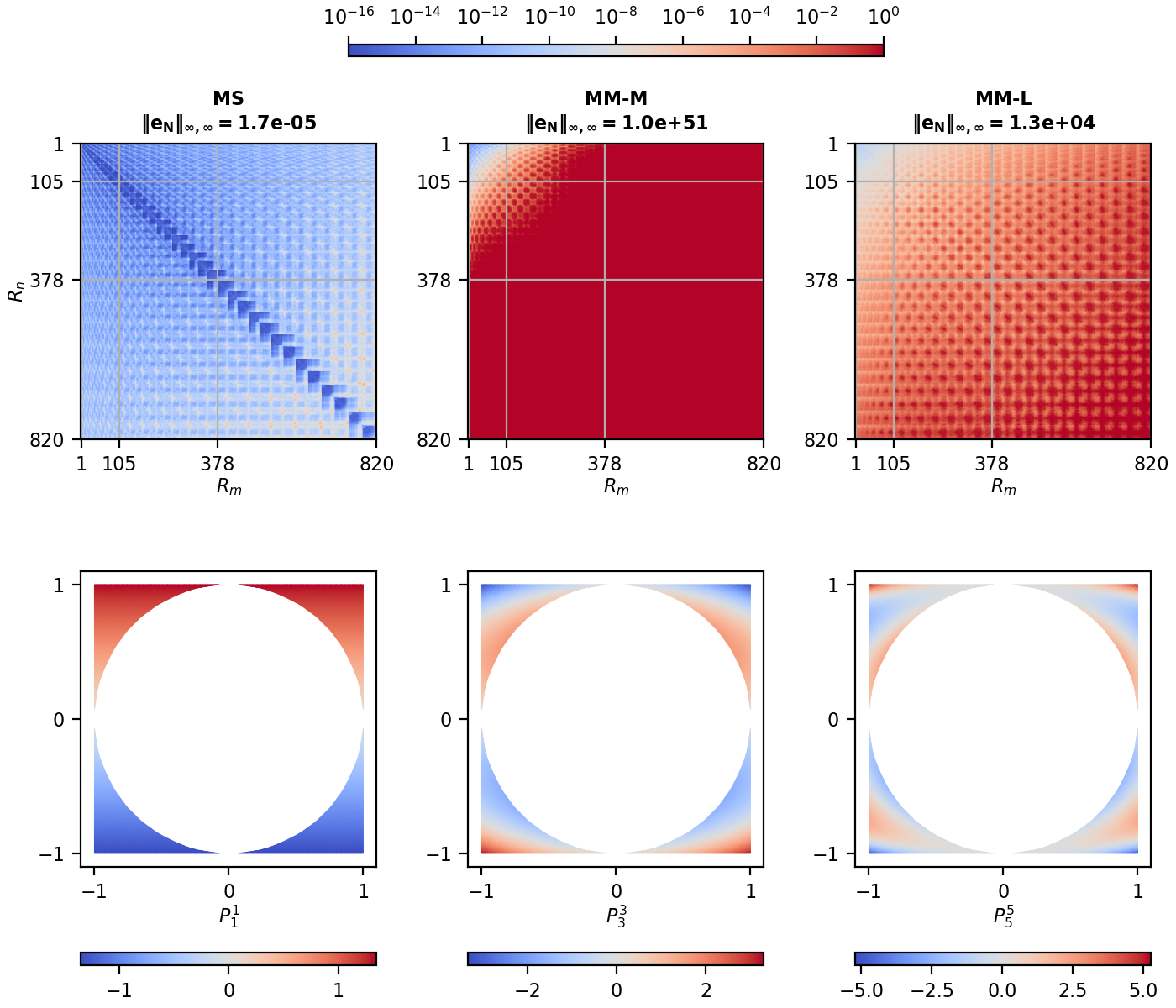}
\caption{\Hole \ results, Section \ref{sssec:hole}: Visualization of the error matrix $E$ for the \MS{}, \MM{}, and \ML{} algorithms from left to right. Bottom: Evaluations of the $r_n$'th entry of $\mathbbm{p}_n$ for degree $n = 1, 3, 5$ using the \MS{} algorithm.}\label{fig:hole}
\end{figure}

\subsubsection{\Map: Measure on map of Madagascar}\label{sssec:map}
Our final two-dimensional example is the region of the country of Madagascar. We draw random samples from this region via rejection sampling over a latitude-longitude bounding box, where the rule for inclusion in the domain is defined by positive elevation, which can be sampled via the data in \cite{smith1997global}. We map the bounding box to $[-1,1]^2$ for simpler plotting.

As can be seen in Figure \ref{fig:map}, our orthogonality metric explodes very quickly for the \MM{} and \ML{}, even for relatively small polynomial degree. The \MS{} succeeds to a much greater degree, but produces relatively large errors. Again we attribute this to increased ill-conditioning of associated matrices in our procedure, see the uptrend of condition number in Section \ref{ssec:condnum}.

\begin{figure}[htbp]
\centering
\includegraphics[width=0.6\textwidth]{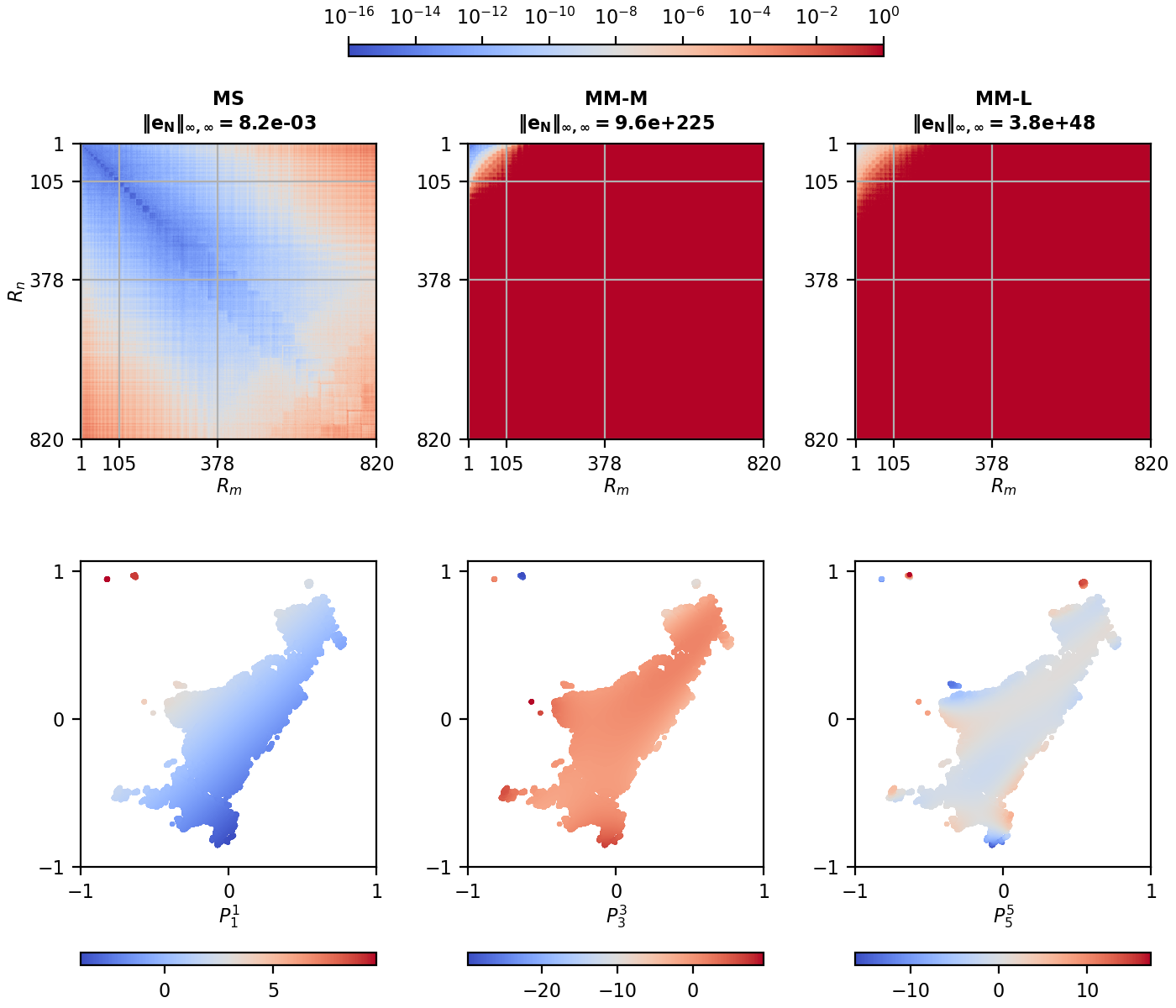}
  \caption{\Map \ results, Section \ref{sssec:map}: Top: Visualization of the error matrix $E$ for the \MS{}, \MM{}, and \ML{} algorithms from left to right. Bottom: Evaluations of the $r_n$'th entry of $\mathbbm{p}_n$ for degree $n = 1, 3, 5$ using the \MS{} algorithm.}\label{fig:map}
\end{figure}

\subsection{Stability investigation via condition numbers}\label{ssec:condnum}
We justify the accuracy of the \MS{}, \MM{}, and \ML{} algorithms by investigating the condition numbers of some of the associated matrices in each procedure. For the \MM{} and \ML{} algorithms, we investigate the condition number of the Gram matrix $G_n$ in \eqref{eq:gram-matrix}. For \MS{}, we investigate the condition number of the moment matrix $T_{n,i,i}$ in \eqref{eq:ST-def} plotting the average of condition number of $T_{n,i,i}$ over all $i$.

Figure \ref{fig:cond} shows these condition numbers for all our previous examples. We note that the $G_n$ matrices are badly ill-conditioned for larger degrees, but the $T_n$ matrices are much better conditioned. In addition, we see that for the \Map{} and \Hole{} cases the condition number of $T_n$ is larger than for other cases, which motivates why even the \MS{} algorithm struggles for these domains.

\begin{figure}[htbp]
  \begin{center}
\includegraphics[width=1.00\textwidth]{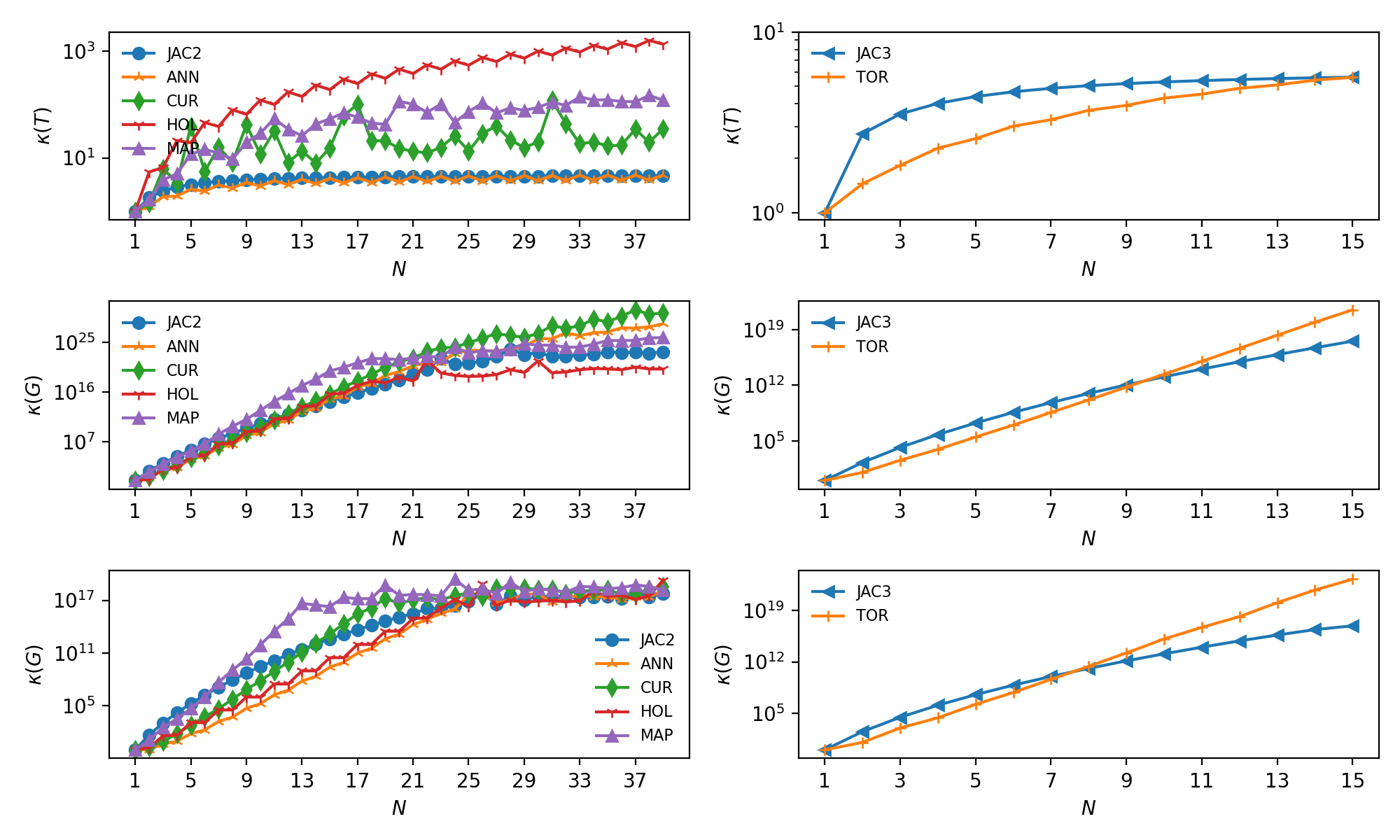}
  \end{center}
  \caption{Top row: Condition numbers of the moment matrices $T_{n,i,i}$ in \eqref{eq:ST-def} used in the \MS{} algorithm (averaged over $i \in [d]$). Middle row: Condition numbers for the Gram matrices $G_n$ in \eqref{eq:gram-matrix} used in the \MM{} algorithm. Bottom row: Same as the middle row but for the \ML{} algorithm.}\label{fig:cond}
\end{figure}

\subsection{The Christoffel function}\label{ssec:christoffel}
The ability to stably compute an orthogonal basis in multiple dimensions allows us to investigate interesting phenomena. Let $\mu$ be uniform over a compact set in $\numset{R}^d$, and consider the diagonal $K_N$ of the degree-$N$ normalized reproducing kernel, and its inverse $\lambda_N$, the Christoffel function,
\begin{align*}
  K_N(x) &= \frac{1}{R_N} \sum_{n=0}^N \mathbbm{p}_n^T(x) \mathbbm{p}_n(x),  &
  \lambda_N(x) &= 1/K_N(x),
\end{align*}
so that $K_N(x) \dx{\mu}(x)$ is a probability density. Random sampling from this probability measure is known to result in near-optimal sample complexity for constructing least-squares polynomial approximations to functions in the $L^2_\mu$ norm \cite{cohen_optimal_2017}. Plotting such densities is itself interesting, but even more so is that fact that as $N \uparrow \infty$, such densities weakly converge to the Monge-Amp\`{e}re measure over $\mathrm{supp}(\mu)$ \cite{berman_growth_2010,levenberg_weighted_2010}, which is a fundamental quantity of theoretical interest in polynomial approximation in several variables \cite{levenberg_approximation_2006,bloom_polynomial_2012}. As analytic forms for such measures are unknown for general domains, it is interesting to use numerical algorithms to investigate the finite but large $N$ behavior of $K_N$, which is not possible directly without the ability to stably compute an orthonormal basis. Figure \ref{fig:christoffel} plots both $K_N$ and $\lambda_N$ for four of our two-dimensional domains with $N=39$.

\begin{figure}[htbp]
\centering
\includegraphics[width=1\textwidth]{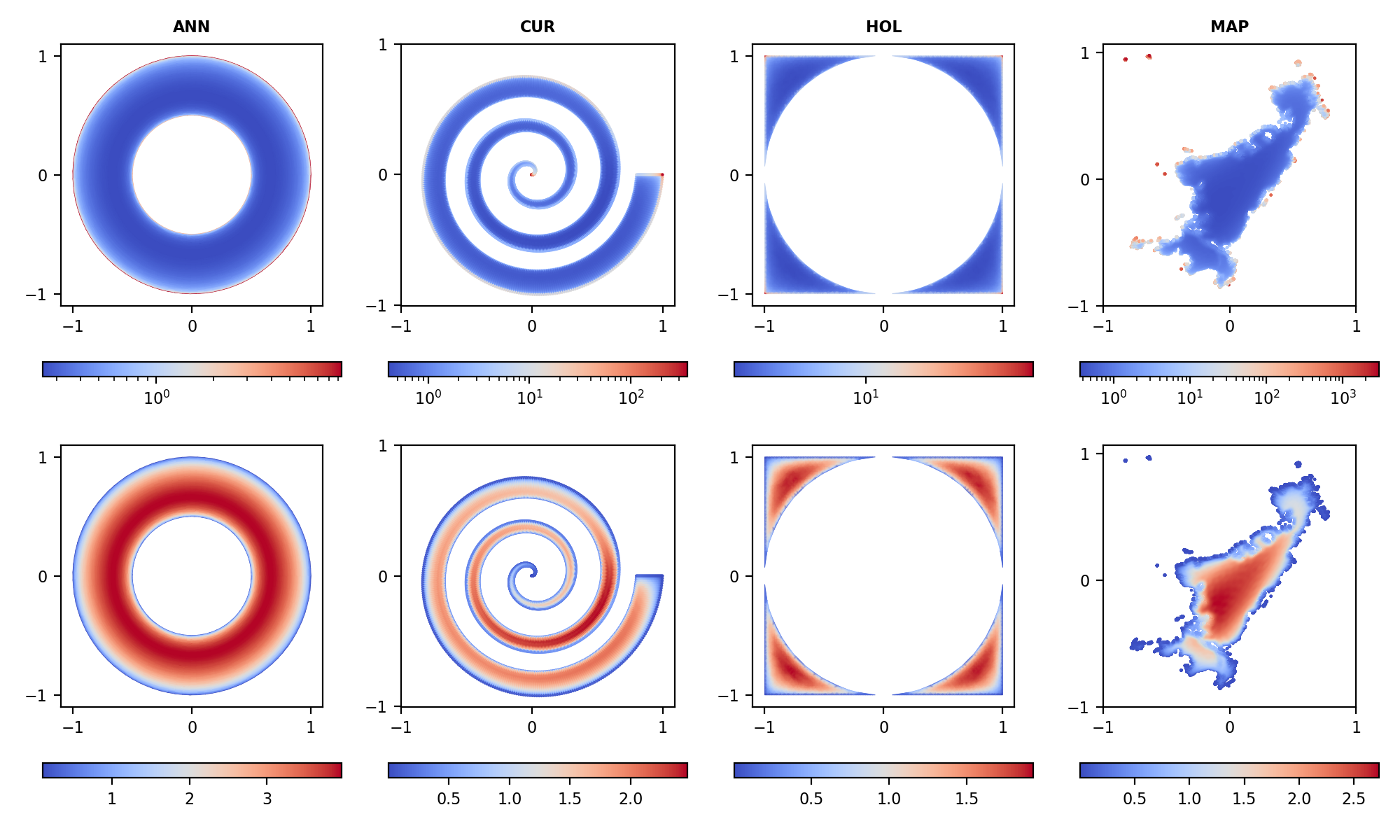}
  \caption{The normalized reproducing diagonal $K_N$ (top row) and normalized Christoffel function $\lambda_N$ (bottom row) for $N=39$.}\label{fig:christoffel}
\end{figure}

\section{Conclusions}\label{sec:conclusions}
In this paper, we extend existing approaches for computing recurrence coefficients from the univariate case to the multivariate case. We propose a new, Multivariate Stieltjes (\MS) algorithm for computing recurrence matrices that allows stable evaluation of multivariate orthonormal polynomials. 

We demonstrate with several numerical examples the substantially improved stability of the new algorithm compared to direct orthogonalization approaches. For both small dimension and small polynomial degree, there is little benefit, but \MS{} outperforms other methods when one requires polynomials of moderately large degree.

The algorithm is essentially explicit in two and three dimensions, but requires the numerical solution to a non-convex optimization problem in more than three dimensions, whose investigation would be a natural extension of this work.

\bibliographystyle{siam}
\bibliography{references.bib}

\begin{thebibliography}{10}

\bibitem{barrio_three_2010}
{\sc R.~Barrio, J.~M. Peña, and T.~Sauer}, {\em Three term recurrence for the
  evaluation of multivariate orthogonal polynomials}, Journal of Approximation
  Theory, 162 (2010), pp.~407--420.

\bibitem{berman_growth_2010}
{\sc R.~Berman and S.~Boucksom}, {\em Growth of balls of holomorphic sections
  and energy at equilibrium}, Inventiones mathematicae, 181 (2010),
  pp.~337--394.

\bibitem{bloom_polynomial_2012}
{\sc T.~Bloom, L.~P. Bos, J.-P. Calvi, and N.~Levenberg}, {\em Polynomial
  interpolation and approximation in {$\mathbbm{C}^d$}}, Annales Polonici
  Mathematici, 106 (2012), pp.~53--81.

\bibitem{chebyshev1859interpolation}
{\sc P.~L. Chebyshev}, {\em Sur l'interpolation par la m{\'e}thode des moindres
  carr{\'e}s}, M\'emoires de l\'Acad\'emie Imp\'eriale des sciences de
  St.-P\'etersbourg,  (1859), pp.~1--24.

\bibitem{cohen_optimal_2017}
{\sc A.~Cohen and G.~Migliorati}, {\em Optimal weighted least-squares methods},
  SMAI Journal of Computational Mathematics, 3 (2017), pp.~181--203.
\newblock arxiv:1608.00512 [math.NA].

\bibitem{dunkl2014orthogonal}
{\sc C.~F. Dunkl and Y.~Xu}, {\em Orthogonal polynomials of several variables},
  no.~155, Cambridge University Press, 2014.

\bibitem{gautschi_survey_1981}
{\sc W.~Gautschi}, {\em A {Survey} of {Gauss}-{Christoffel} {Quadrature}
  {Formulae}}, in E. {B}. {Christoffel}, P.~L. Butzer and F.~Fehér, eds.,
  Birkhäuser Basel, 1981, pp.~72--147.

\bibitem{gautschi1984some}
\leavevmode\vrule height 2pt depth -1.6pt width 23pt, {\em On some orthogonal
  polynomials of interest in theoretical chemistry}, BIT Numerical Mathematics,
  24 (1984), pp.~473--483.

\bibitem{gautschi_orthogonal_2004}
\leavevmode\vrule height 2pt depth -1.6pt width 23pt, {\em Orthogonal
  {Polynomials}: {Computation} and {Approximation}}, Oxford University Press,
  USA, June 2004.

\bibitem{gautschi2006orthogonal}
\leavevmode\vrule height 2pt depth -1.6pt width 23pt, {\em Orthogonal
  polynomials, quadrature, and approximation: computational methods and
  software (in matlab)}, in Orthogonal polynomials and special functions,
  Springer, 2006, pp.~1--77.

\bibitem{gower_procrustes_2004}
{\sc J.~C. Gower and G.~B. Dijksterhuis}, {\em Procrustes {Problems}}, Oxford
  {Statistical} {Science} {Series}, Oxford University Press, Oxford, 2004.

\bibitem{gragg1984numerically}
{\sc W.~B. Gragg and W.~J. Harrod}, {\em The numerically stable reconstruction
  of jacobi matrices from spectral data}, Numerische Mathematik, 44 (1984),
  pp.~317--335.

\bibitem{jackson1936formal}
{\sc D.~Jackson}, {\em Formal properties of orthogonal polynomials in two
  variables}, Duke Mathematical Journal, 2 (1936), pp.~423--434.

\bibitem{kowalski1982orthogonality}
{\sc M.~Kowalski}, {\em Orthogonality and recursion formulas for polynomials in
  n variables}, SIAM Journal on Mathematical Analysis, 13 (1982), pp.~316--323.

\bibitem{kowalski1982recursion}
\leavevmode\vrule height 2pt depth -1.6pt width 23pt, {\em The recursion
  formulas for orthogonal polynomials in n variables}, SIAM Journal on
  Mathematical Analysis, 13 (1982), pp.~309--315.

\bibitem{krall1967orthogonal}
{\sc H.~Krall and I.~Sheffer}, {\em Orthogonal polynomials in two variables},
  Annali di Matematica Pura ed Applicata, 76 (1967), pp.~325--376.

\bibitem{levenberg_approximation_2006}
{\sc N.~Levenberg}, {\em Approximation in {C}{\textasciicircum}{N}}, Surveys in
  Approximation Theory, 2 (2006), pp.~92--140.
\newblock arXiv: math/0611249.

\bibitem{levenberg_weighted_2010}
{\sc N.~Levenberg}, {\em Weighted {Pluripotential} {Theory} {Results} of
  {Berman}-{Boucksom}}, arXiv:1010.4035,  (2010).

\bibitem{liu2021computation}
{\sc Z.~Liu and A.~Narayan}, {\em On the computation of recurrence coefficients
  for univariate orthogonal polynomials}, Journal of Scientific Computing, 88
  (2021), p.~53.

\bibitem{rutishauser1963jacobi}
{\sc H.~Rutishauser}, {\em On jacobi rotation patterns}, in Proceedings of
  Symposia in Applied Mathematics, vol.~15, 1963, pp.~219--239.

\bibitem{sack_algorithm_1971}
{\sc R.~A. Sack and A.~F. Donovan}, {\em An algorithm for {Gaussian} quadrature
  given modified moments}, Numerische Mathematik, 18 (1971), pp.~465--478.

\bibitem{smith1997global}
{\sc W.~H. Smith and D.~T. Sandwell}, {\em Global sea floor topography from
  satellite altimetry and ship depth soundings}, Science, 277 (1997),
  pp.~1956--1962.

\bibitem{stieltjes1884quelques}
{\sc T.~J. Stieltjes}, {\em Quelques recherches sur la th{\'e}orie des
  quadratures dites m{\'e}caniques}, in Annales scientifiques de l'{\'E}cole
  Normale Sup{\'e}rieure, vol.~1, 1884, pp.~409--426.

\bibitem{stieltjes1884some}
\leavevmode\vrule height 2pt depth -1.6pt width 23pt, {\em Some research on the
  theory of so-called mechanical quadratures}, in Scientific annals of the {\
  'E} cole Normale Sup {\' e} rieure, vol.~1, 1884, pp.~409--426.

\bibitem{szego_orthogonal_1975}
{\sc G.~Szegö}, {\em Orthogonal {Polynomials}}, American Mathematical Soc.,
  4th~ed., 1975.

\bibitem{van2007discrete}
{\sc W.~Van~Assche}, {\em Discrete painlev{\'e} equations for recurrence
  coefficients of orthogonal polynomials}, in Difference equations, special
  functions and orthogonal polynomials, World Scientific, 2007, pp.~687--725.

\bibitem{waldron_recursive_2011}
{\sc S.~Waldron}, {\em Recursive {Three}-{Term} {Recurrence} {Relations}
  for the {Jacobi} {Polynomials} on a {Triangle}}, Constructive
  Approximation, 33 (2011), pp.~405--424.

\bibitem{wheeler_modified_1974}
{\sc J.~C. Wheeler}, {\em Modified moments and {Gaussian} quadratures}, Rocky
  Mountain Journal of Mathematics, 4 (1974), pp.~287--296.

\bibitem{xu1993multivariate}
{\sc Y.~Xu}, {\em On multivariate orthogonal polynomials}, SIAM journal on
  mathematical analysis, 24 (1993), pp.~783--794.

\bibitem{xu1994multivariate}
\leavevmode\vrule height 2pt depth -1.6pt width 23pt, {\em Multivariate
  orthogonal polynomials and operator theory}, Transactions of the American
  Mathematical Society, 343 (1994), pp.~193--202.

\bibitem{wolfgang_zu_castell_inzell_2005}
{\sc W.~zu~Castell, F.~Filbir, and B.~Forster}, {\em Inzell lectures on
  orthogonal polynomials}, Nova Publishers, 2005.

\end{thebibliography}

\appendix
\section{Algorithms}\label{sec:alg}

\begin{algorithm}
\caption{Algorithm via three-term relation}\label{alg:ttr}
\KwIn{dimension $d$ and univariate recurrence coefficients $\{a_{i,n}, b_{i,n}\}_{n=0}^N$, $i\in[d]$}
\For{$n=1$ \KwTo $N$}{
	\For{$i=1$ \KwTo $d$}{
		compute $c\left(\alpha^{(n,k)}, i\right)$ defined in Section \ref{ssec:recurrence} \;
		compute $A_{n,i}, B_{n,i}$ by \eqref{eq:AB-tensorial}
	}
}
\KwOut{coefficient matrices $\{A_{n,i}, B_{n,i}\}_{n=0}^N$}
\end{algorithm}

\begin{algorithm}
\caption{Stieltjes procedure}\label{alg:ms}
\SetAlgoVlined
\KwIn{dimension $d$, max degree $N$, measure $\dx{\mu}$ on support $\Omega$.}
\For{$n=0$ \KwTo $N-1$}{
\For{$i=1$ \KwTo $d$}{
evaluate orthonormal polynomials $\mathbbm{p}_{n}$ by \eqref{eq:diagonal-ttr} \;
compute $A_{n+1,i}$ using moment matrices (\eqref{eq:An-stieltjes}) \;
evaluate the modified polynomials basis $\widetilde{\mathbbm{p}}_{n,i}$ by \eqref{eq:ptilde-def} \;
compute symmetric moments $T_{n,i,i}$ by \eqref{eq:ST-def} \;
compute $U_{n+1,i}$ and $\Sigma_{n+1,i}$ in the SVD of $B_{n+1,i}$ from \eqref{eq:stieltjes-symmetric} \;
\For{$j=i+1$ \KwTo $d$}{
compute the mixed moment $T_{n,i,j}$ by \eqref{eq:ST-def} \;
}
}
determine $\widehat{V}_{n+1,1}$ and $\widetilde{V}_{n+1,i}$ by \eqref{eq:V_1}, and assemble the matrix $V_{n+1, 1}$ by \eqref{eq:V-def} \;
compute $B_{n+1,1}$ by \eqref{eq:B-decomposition} \;
\For{$j=2$ \KwTo $d$}{
compute $\widehat{V}_{n+1,j}$ by \eqref{eq:hatV} \;
compute $\bs{y} \bs{y}^T$ by \eqref{eq:V-orthonormality} \;
\uIf{d=2}{
compute $\bs{y}$ by \eqref{eq:yz-condition}
} \uElseIf {d=3}{
\eIf{n=0}{
compute $B_{n+1,j}$ following the strategy in Section \ref{sssec:backonmoments}
}{
compute $K_{n+1,j}$ from \eqref{eq:Kj-def} and its kernel $\Psi_j$ \;
compute $E_j$ from \eqref{eq:Cj-decomp} \;
\eIf{j=2}{
Set $W_2 = I_{\Delta_{r_n+1}}$}{
compute $W_3$ from \eqref{eq:E3-wopp} following the strategy in Section \ref{sssec:d3-woop}}
}
assemble the matrix $V_{n+1,j}$ by \eqref{eq:V-def} \;}
\uElse{solve $W_j$ from \eqref{eq:wopp}}
determine $C_j$ by \eqref{eq:Cj-decomp}, and thus $\widetilde{V}_{n+1,j} by \eqref{eq:Cj-def}$
}
compute $B_{n+1,j}$ by \eqref{eq:B-decomposition} \;}
\KwOut{coefficient matrices $\{A_{n,i}, B_{n,i}\}_{n=0}^N$}
\end{algorithm}

\end{document}